\DeclareMathOperator{\rank}{rank}
\DeclareMathOperator{\trace}{tr}
\DeclareMathOperator{\extern}{ext}
\DeclareMathOperator{\dg}{diag}
\newcommand{\R}{\mathbb{R}}
\newcommand{\N}{\mathbb{N}}
\newtheorem{theorem}{Theorem}
\newtheorem{lemma}[theorem]{Lemma}
\newtheorem{proposition}[theorem]{Proposition}
\newtheorem{corollary}[theorem]{Corollary}
\newdefinition{remark}[theorem]{Remark}
\newproof{proof}{Proof}
\journal{European Journal of Control}
\begin{document}

\begin{frontmatter}



\title{Infinite-dimensional port-Hamiltonian systems with a stationary interface}


\author[inst1]{Alexander Kilian}

\affiliation[inst1]{organization={Faculty of Computer Science and Mathematics, University of Passau},
            country={Germany}}

\author[inst2]{Bernhard Maschke}
\author[inst3]{Andrii Mironchenko}
\author[inst1]{Fabian Wirth}

\affiliation[inst2]{organization={LAGEPP,  UMR CNRS 5007, Université Claude Bernard Lyon-1} ,
            addressline={Bd du 11 Novembre 1918}, 
            city={Villeurbanne cedex},
            postcode={F-69622}, 
            country={France}}

\affiliation[inst3]{organization={Department of Mathematics, University of Bayreuth},
            country={Germany}}

\begin{abstract}
We consider two systems of two conservation laws that are defined on complementary, one-dimensional spatial intervals and coupled by an interface as a single port-Hamiltonian system. In case of a fixed interface position, we characterize the boundary and interface conditions for which the associated port-Hamiltonian operator generates a contraction semigroup. Furthermore, we present sufficient conditions for the exponential stability of the generated $C_0$-semigroup. The results are illustrated by the example of two acoustic waveguides coupled by a membrane interface.
\end{abstract}



\begin{keyword}
port-Hamiltonian systems \sep strongly continuous semigroups \sep stationary interface \sep exponential stability
\MSC 35L02 \sep 35Q93 \sep 37L15 \sep 35B35 \sep 93C05
\end{keyword}

\end{frontmatter}


\section{Introduction}
Linear infinite-dimensional systems theory is a well-established subject \cite{Curtain2020,Tucsnak2009,Jacob2012}, constituting a basis for analysis of broad classes of distributed parameter systems. However, \emph{physical systems} belonging to mechanical, electric, hydraulic, thermal, and other domains share common specific structures that can be used to develop methods for modeling, analysis and control, which are more natural and easy-to-use than those available for general linear systems. 
The port-Hamiltonian systems theory \cite{Jacob2012, Duindam2009, Schaft2014} is a systematic framework that develops such methods.
This modeling framework is an extension of the \emph{Hamiltonian systems}, stemming from mechanics \cite{Arnold1978}, to open systems in a control systems' perspective where the interaction with the environment occurs via a pair of external variables called \emph{port variables} \cite{schaftGeomPhys02, Gorrec2005}.

Following the semigroup approach, consider the abstract Cauchy problem of the form
\begin{align*}
	\dot{x}(t) &= Ax(t),
 \quad t > 0, \\
	x(0) &= x_0 \in X,
\end{align*}
where $A \colon D(A) \subset X \to X$ 
is a linear unbounded operator on some Hilbert space $X$. In the port-Hamiltonian framework \cite[pp. 226-228]{Duindam2009}, \cite[Section 3]{Gorrec2005}, \cite[Chapter 2]{Villegas2007},  \cite[Chapter 3]{Augner2016}, \cite{Rashad_IMA18_20yearsBPHS},  the operator $A$ has the form
\begin{align*}
 A = \mathcal{J}(\mathcal{Q}\cdot),  
\end{align*}
where $\mathcal{J}$ is a Hamiltonian operator \cite{Olver93}, which is a formally skew-symmetric constant coefficient matrix differential operator, modeling the interdomain coupling in physical systems, and $\mathcal{Q}$ is a symmetric coercive and bounded multiplication operator defining the energy of the physical system.
Furthermore, the system is augmented with a pair of \emph{boundary port variables} defined by a boundary port operator $R_{\extern}$. This operator is associated with the Hamiltonian operator $\mathcal{J}$ \cite[Def. 3.5]{Gorrec2005}, in a very similar way as for boundary triplets \cite{Kurula_IJC_15}
\begin{align*}
    \begin{bmatrix}
				f_\partial \\
				e_\partial
			\end{bmatrix}= R_{\extern} \trace\left(\mathcal{Q}x\right),
\end{align*}
where $\trace$ denotes the trace operator.
The well-documented theory of strongly continuous semigroups \cite{Curtain2020, Engel2000, Pazy1983} may then be used to derive results on well-posedness, stability and stabilization by boundary feedback \cite[Section 4.4]{Rashad_IMA18_20yearsBPHS}. The most important class of semigroups in the framework of infinite-dimensional port-Hamiltonian systems are contraction semigroups, which are closely related to energy dissipation. Typically, dissipation phenomena are incorporated into the domain of the operator $A$ by means of the port variables, and this operator is the infinitesimal generator of a contraction semigroup. For control purposes, control and conjugated observation variables may be defined as a linear combination of the port variables, leading to the definition of (port-Hamiltonian) boundary control systems \cite[Chapter 10]{Tucsnak2009}, \cite{Fattorini1968}. Such systems have been extensively studied in \cite{Villegas2007, Augner2016}.

An important property of port-Hamiltonian systems is that the power-conserving interconnection of port-Hamiltonian systems again defines a port-Hamiltonian system. This interconnection is defined by the composition of the underlying (Stokes-)Dirac structures \cite[Section 2.2, Section 14.1]{Schaft2014} through a number of mutually shared port variables. The internally stored energy of the interconnected system, described by the Hamiltonian on the product space, is simply the sum of the Hamiltonians of the constituting subsystems. We refer to \cite[Chapter 6]{Schaft2014} for the finite-dimensional case as well as to \cite[Chapter 7]{Villegas2007}, \cite{Hamroun2010}, and the references therein for the infinite-dimensional case. In \cite[Section 2.2]{Diagne2013}, it has been depicted how port-Hamiltonian systems are coupled through their (fixed) boundaries. 

In this paper, we analyze systems with an interface within their spatial domain, which may be moving due to non-equilibrium conditions at the interface. Such systems arise, for instance, in varying causality systems, such as extrusion processes, where some parts are completely filled and some others are not \cite{Lotero_MCMDS17}, phase transition phenomena in multiphase systems, such as evaporators or condensers \cite{Willatzen_IntJRefrigeration1998}, or the celebrated Stefan problem \cite{Gupta2003}.

In \cite{Diagne2013}, the authors proposed the port-Hamiltonian formulation of a physical system containing a moving interface. Instead of considering coupling two systems by a Dirac structure or a port-Hamiltonian system \cite{Kurula_JMAA_10,Jacob_JEvEq2015c}, in \cite{Diagne2013}, one introduces the interface using additional variables, the \emph{color functions} \cite{Godlewski2004,Boutin2008}. These color functions are the characteristic functions of spatial domains separated by the interface and correspond to the so-called thin interface assumption. A motivation for this perspective is also provided by the phase field theory for multiphase systems, where the interface separating the different phases has a certain thickness and is characterized by a distributed variable called \emph{phase field} \cite{Vincent_IFAC_WC20_PhaseFields}. 

\textbf{Contribution.} Having proposed a port-Hamiltonian formulation for systems with an interface, the authors of \cite{Diagne2013} do not proceed to its well-posedness analysis. In this paper, motivated by \cite{Diagne2013}, we propose a mathematically rigorous definition of this class of systems, where the Hamiltonian operator depends on the color functions. Then, we model such systems as abstract evolution equations and use the methods of semigroup theory to provide easy-to-check criteria for boundary and interface conditions that guarantee that the system with a \emph{fixed} interface induces a contraction semigroup. Under somewhat stronger conditions, we prove the exponential stability of such a system. We apply our findings to the interface coupling of two acoustic waveguides. This work can be considered as a first step to the analysis of one-dimensional port-Hamiltonian systems with a moving interface, where the presented fixed interface model serves as a reference frame.

We note that the coupling problem we formulate here can also be embedded in a more general framework for the interconnection of port-Hamiltonian systems, which was developed in \cite{Augner2020}. Several of our results can be obtained using the methods presented in this reference. We see the following advantages in following the approach that we present here: The modeling presented here is closer to the actual problem and does not take a route through a significant body of abstract theory, even though we have to admit that the level of abstraction in the present paper is already significant. Secondly, the conditions we derive e.g. for exponential stability are more straightforward to formulate and to check than following the framework of \cite{Augner2020}. In addition, because we are closer to one very specific situation, our conditions are a bit less stringent than those derived for the much more general case, see the discussion after Theorem~\ref{Theorem Exponential Stability}.

	\textbf{Structure of the paper.} 
 Section \ref{Section Port-Hamiltonian Systems with Interface} introduces port-Hamiltonian systems with an interface. It is split into three parts. In Subsection \ref{Subsection Model Description}, we consider two systems of conservation laws defined on two complementary spatial domains that we want to interconnect at the interface position, which is assumed to be fixed. The notions of interface port variables and color functions are introduced, which allows us to formalize the state variables of the aggregate system on the composed domain. In Subsection \ref{Subsection Derivation of the Balance Equations}, we briefly describe the derivation of the conservation laws of the aggregate system having regard to the interface conditions. Subsection \ref{Subsection Port-Hamiltonian Formulation} is devoted to the port-Hamiltonian formulation of the system dynamics. We introduce the formally skew-symmetric differential operator associated with the aggregate system. Furthermore, we define the boundary and interface port variables in case of a fixed interface position, and state an energy balance equation for this class of systems. In Section~\ref{Section Generation of a Contraction Semigroup}, we characterize boundary and interface conditions for which the port-Hamiltonian operator associated with the considered port-Hamiltonian system generates a contraction semigroup on the energy space. Under stronger assumptions concerning the boundary conditions, we guarantee exponential stability of the semigroup, as we work out in Section \ref{Section Exponential Stability}.  
    In Section~\ref{sec:Example}, we apply our findings to the interface coupling of two acoustic waveguides. Section~\ref{Section Port-Hamiltonian systems with multiple interfaces} briefly discusses port-Hamiltonian systems with multiple stationary interfaces.
    In Section~\ref{sec:conclusions}, we conclude our paper. In particular, we discuss how our methods can be applied for the analysis of two port-Hamiltonian systems coupled via a moving interface, see also \cite{KMMW23} for more details.

    \paragraph*{Notation}
	Throughout the paper $X$ is a Hilbert space and $\mathcal{L}(X)$ is the space of linear bounded operators on $X$.
	We denote by $\mathcal{C}([a,b], X)$ and $\mathcal{C}^k([a,b], X)$ the vector spaces of $X$-valued, continuous and $k$-times continuously differentiable functions $f \colon [a,b] \to X$; $L^2([a,b], \mathbb{R}^n)$ denotes the vector space of $\mathbb{R}^n$-valued square-integrable functions on the interval $[a,b]$, and $L^{\infty}([a,b], \mathbb{R}^n)$ is the vector space of $\mathbb{R}^n$-valued Lebesgue measurable essentially bounded functions on $[a,b]$. Moreover, $H^1([a,b], \mathbb{R}^n)$ denotes the Sobolev space of functions from $L^2([a,b], \mathbb{R}^n)$ that have weak derivatives belonging to $L^2([a,b], \mathbb{R}^n)$. Further, the vector space of infinitely differentiable functions with compact support in $(a,b) \subset \mathbb{R}$, that is, the space of test functions on $(a,b)$, is denoted by $\mathcal{D}(a,b)$. The Euclidean norm in $\R^n$ is denoted by $|\cdot|$, and $I_n$ is the identity matrix of size $n\times n$.

\section{Port-Hamiltonian Systems with Interface}
\label{Section Port-Hamiltonian Systems with Interface}
In this section, we propose a port-Hamiltonian formulation for a system composed of two conservation laws, coupled by means of the interface, satisfying certain interface relations. For a detailed discussion, we refer to \cite{Diagne2013}, \cite[Section 5.2]{Kilian2022}.

\subsection{Model Description}
\label{Subsection Model Description}
 Let $a<0<b$, and let $l \in (a,b)$ be the fixed position of the interface. Consider two systems of conservation laws of the form
\begin{align}
	\begin{split}
		\frac{\partial}{\partial t} x^-(z,t) &= P_1 \frac{\partial}{\partial z} \big(\mathcal{Q}^-(z) x^-(z,t)\big), \quad  z \in [a,l), \quad t >0,  \\
		\frac{\partial}{\partial t} x^+(z,t) &= P_1 \frac{\partial}{\partial z} 
		\big(\mathcal{Q}^+(z) x^+(z,t)\big), \quad z \in (l,b], \quad t > 0,
	\end{split} 
	\label{Systems of Conservation Laws} 
\end{align}
defined on the respective interval $[a,l)$ or $(l,b]$, where
\begin{equation}
    P_1 := \begin{bmatrix}
        0 & -1 \\
        -1 & 0
    \end{bmatrix} \in \mathbb{R}^{2 \times 2},
    \label{Matrix P1}
\end{equation}
and $\mathcal{Q}^{\pm} \in L^{\infty}([a,b], \mathbb{R}^{2 \times 2})$ are symmetric and satisfy $mI_2 \leq \mathcal{Q}^{\pm}(z) \leq MI_2$ for almost all $z \in [a,b]$ and for some $0 < m \leq M$.

Let us define the \emph{flux variables}
\begin{align*}
	\mathcal{N}^-(x^-) &:= - P_1 (\mathcal{Q}^- x^-), \quad x^- \in X^- := L^2((a,l), \mathbb{R}^2), \\
	\mathcal{N}^+(x^+) &:= - P_1 (\mathcal{Q}^+ x^+), \quad x^+ \in X^+ := L^2((l,b), \mathbb{R}^2).
\end{align*} 
Since $\mathcal{Q}^{\pm}$ are essentially bounded, we have $\mathcal{N}^{\pm}(x^{\pm}) \in X^{\pm}$.

Our objective is to interconnect these two systems to a single port-Hamiltonian system on the composed spatial domain $[a,b]$, where the interface has the fixed position $l \in (a,b)$. To do so, we need to introduce suitable state variables defined on the composed domain, and we need to define some \emph{interface relations} at $z = l$. The interface relations simultaneously constitute the \emph{interface port variables} $(e_I, f_I) \in \mathbb{R}^2$. We model them by
\begin{align}
	f_I &:= \mathcal{N}_1^+(x^+)(l^+) = \mathcal{N}_1^-(x^-)(l^-), \label{Interface Variable fI} \\
	-e_I &:= \mathcal{N}_2^+(x^+)(l^+) - \mathcal{N}_2^-(x^-)(l^-). \label{Interface Variable eI}
\end{align}

Equation \eqref{Interface Variable fI} is a continuity equation of the flux variables $\mathcal{N}_1^{\pm}(x^{\pm})$, and \eqref{Interface Variable eI} is a balance equation of the flux variables $\mathcal{N}_2^{\pm}(x^{\pm})$. These interface relations are commonly considered (cf.  \cite{Godlewski2004, Boutin2008}) and are applicable to stationary and moving interface situations. For example, consider a power transmission line with an ohmic resistor placed at the interface position. In this case, $f_I$ requires the current to flow continuously through the resistor and $e_I$ quantifies the voltage drop across it. Similarly, if two isentropic gases are coupled at their interface by some piston in motion, then the interface relation \eqref{Interface Variable fI} corresponds to the requirement that the velocities of the gases on both sides of the piston are equal to the velocity of the piston. The interface relation \eqref{Interface Variable eI} corresponds to the balance of forces exerted by the gases on both sides, where $e_I$ quantifies the pressure jump, see \cite[Example 8]{Diagne2013}. 

Next, we define the \emph{color functions}, which are characteristic functions of the respective complementary subdomain of the considered spatial domain $[a,b]$, and are used to keep track of the interface position:
	\begin{equation}
		c_l^-(z) := \begin{cases} 
			1, & z \in [a,l), \\
			0, & z \in [l,b],
		\end{cases} \quad \text{and} \quad c_l^+(z) :=  \begin{cases} 
			0, & z \in [a,l], \\
			1, & z \in (l,b].
		\end{cases}
		\label{Characteristic Functions c- and c+}
	\end{equation}
Since, in this paper, we assume that the interface position is fixed, the color functions do not depend on time. However, they allow to represent the state variables of the coupled system as the sum of prolongations of the variables of each subsystem to the composed domain $[a,b]$. By abuse of notation, for all $t \geq 0$ and $x(\cdot,t) := (x_1(\cdot,t),x_2(\cdot,t))^{\top} \in X= L^2([a,b], \mathbb{R}^2)$, we have
\begin{align}
\label{aggregate state x}
	x(z,t) &= x^-(z,t) + x^+(z,t) = c_l^-(z)x(z,t) + c_l^+(z)x(z,t).
\end{align}

Furthermore, the flux variable $\mathcal{N}(x) \in X$ of the coupled system becomes
\begin{align*}
	\mathcal{N}(x) := c_l^- \mathcal{N}^-(x) + c_l^+ \mathcal{N}^+(x), \quad x \in X, 
\end{align*}
which obviously satisfies 
\begin{align*}
	c_l^- \mathcal{N}(x) =  c_l^- \mathcal{N}^-(x) \quad \text{and} \quad	c_l^+ \mathcal{N}(x)  =  c_l^+\mathcal{N}^+(x).
\end{align*} 
For the sake of brevity, for $i = 1,2,$ we will sometimes write $\mathcal{N}_i^{\pm}(z,t)$ instead of $\mathcal{N}_i^{\pm}(x)(z,t)$, and $\mathcal{N}_i(z,t)$ instead of $\mathcal{N}_i(x)(z,t) = \mathcal{N}_i(x(\cdot,t))(z)$, where $\mathcal{N}_i^{\pm}(z,t)$ and $\mathcal{N}_i(z,t)$ denote the $i$-th entry of the two-dimensional flux variables $\mathcal{N}^{\pm}(z,t)$ and $\mathcal{N}(z,t)$, respectively.

\subsection{Derivation of the Balance Equations}
\label{Subsection Derivation of the Balance Equations}

To define the coupled system as a port-Hamiltonian system on the state space $X$, we derive next the balance equations of the variables $x_1$ and $x_2$ having regard to the interface relations \eqref{Interface Variable fI}-\eqref{Interface Variable eI}. 

We will show this only for the state variable $x_1$. For further details and for the balance equation regarding $x_2$, we refer to  \cite{Diagne2013}, \cite[Section 5.2]{Kilian2022}. Moreover, as we will show, some equations are satisfied only in a distributional sense or when considering weak derivatives; please consult \cite[Sections 7.2 and 7.5]{Aubin2000} for further reading.

Assume that the individual conservation laws \eqref{Systems of Conservation Laws} are satisfied in the weak sense. That is, there are functions $x^- \in H^1(\Omega^-, \mathbb{R}^2$), $x^+ \in H^1(\Omega^+, \mathbb{R}^2$), where $\Omega^- = (a,l) \times (0,\infty)$ and $\Omega^+ = (l,b) \times (0, \infty)$, so that for all test functions $\varphi^{\pm} \in \mathcal{D}(\Omega^{\pm})$ it holds
\begin{equation}
\label{eq:Weak-solutions-subsystems}
    \int_{\Omega^{\pm}} \left( \frac{\partial}{\partial t} x^{\pm} + \frac{\partial}{\partial z} \mathcal{N}^{\pm}(x^{\pm}) \right) \varphi^{\pm} = 0.
\end{equation}
For these functions $x^{\pm}$, we define the function $x$ as in \eqref{aggregate state x}. Additionally, let us assume that the interface condition \eqref{Interface Variable fI} is satisfied as well. Then, the conservation law for $x_1$ of the aggregate system may be formally written as
\begin{align}
	\begin{split}
	\frac{\partial}{\partial t} x_1 &= - \frac{\partial}{\partial z} \mathcal{N}_1(x) \\
	&= - \frac{\partial}{\partial z} \big( c_l^- \mathcal{N}_1^-(x) + c_l^+\mathcal{N}_1^+(x)  \big) \\
	&= - \frac{\partial}{\partial z} \left( c_l^- \mathcal{N}_1(x) + c_l^+ \mathcal{N}_1(x) \right) \\
	&= \underbrace{- \left[ \frac{\partial}{\partial z} ( c_l^- \cdot) + \frac{\partial}{\partial z} (c_l^+\cdot) \right]}_{=: \mathbf{d}_l} \mathcal{N}_1(x).
\end{split}
\label{Conservation Law of x1}
\end{align}
Note that, in \eqref{Conservation Law of x1},
the difference between $\frac{\partial}{\partial z}(c_l^{\pm} \cdot)$ and the notation $\frac{\partial}{\partial z}c_l^{\pm}$ is the following:
\begin{align*}
    \frac{\partial}{\partial z}(c_l^{\pm} \cdot) \colon \mathcal{N}_1(x) \mapsto \frac{\partial}{\partial z}(c_l^{\pm} \mathcal{N}_1(x)), \qquad
    \frac{\partial}{\partial z}c_l^{\pm} \colon \mathcal{N}_1(x) \mapsto \left( \frac{\partial}{\partial z}c_l^{\pm}\right) \mathcal{N}_1(x),
\end{align*}
i.e., in the former, the expression $c_l^{\pm} \mathcal{N}_1(x)$ is differentiated (in a distributional sense), and in the latter, the function $\mathcal{N}_1(x)$ is multiplied by the (distributional) derivative of $c_l^{\pm}$, yielding a point evaluation of $\mathcal{N}_1^{\pm}(x)$ at the interface position $l^{\pm}$. In the following, we will fix $t > 0$ and justify that the formal calculations in equation \eqref{Conservation Law of x1} make sense at least in the distributional sense. Note also that, due to the formal equality of the first and the last line in \eqref{Conservation Law of x1}, we expect that the maximal domain of the operator $\mathbf{d}_l$, $D_{\max}(\mathbf{d}_l)$, satisfies  $H^1([a,b], \mathbb{R}) \subset D_{\max}(\mathbf{d}_l).$
\begin{proposition}
\label{prop:Balance-equation-distributional-sense}
Let $x$ be defined as in \eqref{aggregate state x}, where $x^{\pm}$ are weak solutions of the systems of conservation laws \eqref{Systems of Conservation Laws}, satisfying the continuity condition \eqref{Interface Variable fI}. Then equation (\ref{Conservation Law of x1}) holds in a distributional sense for all $t >0$, that is, for every test function $\varphi \in \mathcal{D}(a,b)$ it holds that
\begin{align*}
     \int_{a}^{b} \left[ \mathbf{d}_l  \mathcal{N}_1(x) (z,t) \right] \varphi(z) \, dz = \int_{a}^{b} \frac{\partial}{\partial t} x_1(z,t) \varphi(z) \, dz.
\end{align*}
\end{proposition}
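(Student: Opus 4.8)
The plan is to fix $t>0$ and verify the claimed identity directly from the definition of a distributional derivative, treating $z \mapsto \mathcal{N}_1(x)(z,t)$ as a function on $(a,b)$. Writing out $\mathbf{d}_l$ and using $c_l^- \mathcal{N}_1(x) = c_l^- \mathcal{N}_1^-(x)$ and $c_l^+ \mathcal{N}_1(x) = c_l^+ \mathcal{N}_1^+(x)$, the left-hand side splits as $\int_a^b [\mathbf{d}_l \mathcal{N}_1(x)]\varphi \, dz = \int_a^b c_l^-\mathcal{N}_1^-(x)\,\varphi' \, dz + \int_a^b c_l^+\mathcal{N}_1^+(x)\,\varphi' \, dz$, so everything reduces to computing the distributional $z$-derivatives of the two truncated fluxes $c_l^{\mp} \mathcal{N}_1^{\mp}(x)(\cdot,t)$.

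First I would record the regularity that makes the traces meaningful: since $x^\pm \in H^1(\Omega^\pm)$, for almost every fixed $t$ the slices $x^\pm(\cdot,t)$ lie in $H^1$ in the spatial variable, hence so do the fluxes $\mathcal{N}_1^\pm(x^\pm)(\cdot,t) = -(P_1 \mathcal{Q}^\pm x^\pm)_1(\cdot,t)$, and in particular the one-sided limits $\mathcal{N}_1^-(x^-)(l^-,t)$ and $\mathcal{N}_1^+(x^+)(l^+,t)$ exist. With this at hand, integration by parts on $(a,l)$ and on $(l,b)$ separately — using $\varphi(a)=\varphi(b)=0$ — gives, in the sense of distributions on $(a,b)$, the jump formulas $\frac{\partial}{\partial z}(c_l^- \mathcal{N}_1^-) = c_l^- \frac{\partial}{\partial z}\mathcal{N}_1^- - \mathcal{N}_1^-(l^-)\,\delta_l$ and $\frac{\partial}{\partial z}(c_l^+ \mathcal{N}_1^+) = c_l^+ \frac{\partial}{\partial z}\mathcal{N}_1^+ + \mathcal{N}_1^+(l^+)\,\delta_l$, where $\delta_l$ denotes the Dirac distribution at $z=l$. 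Adding these and inserting the result into $\mathbf{d}_l$ yields $\mathbf{d}_l \mathcal{N}_1(x) = -c_l^-\frac{\partial}{\partial z}\mathcal{N}_1^- - c_l^+\frac{\partial}{\partial z}\mathcal{N}_1^+ + \big(\mathcal{N}_1^-(l^-) - \mathcal{N}_1^+(l^+)\big)\,\delta_l$.

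The two remaining ingredients then finish the argument. The continuity condition \eqref{Interface Variable fI} states precisely $\mathcal{N}_1^-(x^-)(l^-) = \mathcal{N}_1^+(x^+)(l^+)$, so the singular term proportional to $\delta_l$ cancels and no distributional mass is left at the interface. For the regular part, the weak formulation \eqref{eq:Weak-solutions-subsystems} together with $x^\pm \in H^1(\Omega^\pm)$ gives the pointwise (a.e.\ on $\Omega^\pm$) identities $\frac{\partial}{\partial t}x_1^\pm = -\frac{\partial}{\partial z}\mathcal{N}_1^\pm$; restricting to the fixed slice $t$ and multiplying by the time-independent color functions \eqref{Characteristic Functions c- and c+}, I obtain $\mathbf{d}_l \mathcal{N}_1(x)(\cdot,t) = c_l^-\frac{\partial}{\partial t}x_1^- + c_l^+\frac{\partial}{\partial t}x_1^+ = \frac{\partial}{\partial t}x_1(\cdot,t)$, using the representation \eqref{aggregate state x} of the aggregate state. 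Integrating against $\varphi$ gives the asserted equality.

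The main obstacle is the regularity bookkeeping rather than any single estimate: one has to pass from the space-time regularity $x^\pm \in H^1(\Omega^\pm)$ to well-defined spatial traces $\mathcal{N}_1^\pm(\cdot,t)$ at the interface for the relevant $t$, so that the one-sided limits in the jump formulas and the continuity condition \eqref{Interface Variable fI} are meaningful and the integration by parts on each subinterval is legitimate. Care is also needed in reading \eqref{eq:Weak-solutions-subsystems}, whose test functions live on the space-time cylinders $\Omega^\pm$, as the fixed-time spatial identity $\frac{\partial}{\partial t}x_1^\pm(\cdot,t) = -\frac{\partial}{\partial z}\mathcal{N}_1^\pm(\cdot,t)$; this is where the phrase ``for all $t>0$'' in the statement has to be reconciled with the almost-everywhere nature of the $H^1$ slicing, and it is the only point where one must be slightly careful about the precise function-space setting.
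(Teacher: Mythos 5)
Your proof is correct and follows essentially the same route as the paper's: integration by parts on the two subintervals $(a,l)$ and $(l,b)$, cancellation of the interface traces via the continuity condition \eqref{Interface Variable fI}, and substitution of $-\frac{\partial}{\partial z}\mathcal{N}_1^{\pm}$ by $\frac{\partial}{\partial t}x_1^{\pm}$ from the conservation laws. Packaging the jump terms as Dirac masses $\delta_l$ rather than as boundary terms in the integration by parts is only a presentational difference, and your extra care about the a.e.-in-$t$ slicing of $H^1(\Omega^{\pm})$ is a point the paper passes over silently.
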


\begin{proof}
Let $\varphi \in \mathcal{D}(a,b)$ be some test function. By virtue of \eqref{Systems of Conservation Laws} and due to the the continuity condition \eqref{Interface Variable fI} of $\mathcal{N}_1(x)$, it is easy to check that $\mathcal{N}_1(x(\cdot,t)) \in H^1([a,b], \mathbb{R})$.

Thus, the following holds for all $t > 0$:
\allowdisplaybreaks
\begin{align*}
		\allowdisplaybreaks
 \int_{a}^{b} \left[ \mathbf{d}_l  \mathcal{N}_1(x) (z,t) \right] \varphi(z) dz
	&= \int_{a}^{b} - \left[ \frac{\partial}{\partial z} \Big(c_l^-(z) \mathcal{N}_1(z,t)\Big) + \frac{\partial}{\partial z} \Big(c_l^+ (z) \mathcal{N}_1(z,t)\Big) \right] \varphi(z) \,  dz  \\
\text{(part. int.)} \quad &= \int_{a}^{b} c_l^-(z) \mathcal{N}_1(z,t) \frac{d}{dz} \varphi(z) \, dz + \int_{a}^{b} c_l^+(z) \mathcal{N}_1(z,t) \frac{d}{dz} \varphi(z) \, dz \\
	&= \int_{a}^{l} \mathcal{N}_1^-(z,t) \frac{d}{dz} \varphi(z) \, dz + \int_{l}^{b}  \mathcal{N}_1^+(z,t) \frac{d}{dz} \varphi(z) \, dz \\
\text{(part. int.)} \quad	&=  \mathcal{N}_1^-(l^-,t) \varphi(l) + \int_{a}^{l}  - \frac{\partial}{\partial z} \mathcal{N}_1^-(z,t)  \varphi(z) \,  dz \\
	&\quad - \mathcal{N}_1^+(l^+,t) \varphi(l)   + \int_{l}^{b} - \frac{\partial}{\partial z} \mathcal{N}_1^+(z,t)  \varphi(z) \, dz \\
\quad	&\stackrel{\eqref{Systems of Conservation Laws}}{=}{} \mathcal{N}_1^-(l^-,t) \varphi(l) + \int_{a}^{l}  \frac{\partial}{\partial t} x_1^-(z,t)  \varphi(z) \,  dz \\
	&\quad - \mathcal{N}_1^+(l^+,t) \varphi(l)   + \int_{l}^{b} \frac{\partial}{\partial t} x_1^+(z,t)  \varphi(z) \, dz \\
 \quad	&\stackrel{\eqref{Interface Variable fI}}{=}{} \int_{a}^{b} \frac{\partial}{\partial t} x_1(z,t) \varphi(z) \, dz. 
\end{align*}
This proves the claim.  \qed
\end{proof}

Note that for all $t >0$, $\varphi \in \mathcal{D}(a,b)$, 
one computes
\begin{align*}
- \int_{a}^{b} \frac{d}{dz} c_l^-(z) \mathcal{N}_1(z,t) \varphi(z) \, dz =  \int_{a}^{l} \frac{\partial}{\partial z} \big(\mathcal{N}_1^-(z,t) \varphi(z)\big) \, dz = \mathcal{N}_1^-(l^-,t) \varphi(l), 
\end{align*}
and
\begin{align*}
	- \int_{a}^{b} \frac{d}{dz} c_l^+(z) \mathcal{N}_1(z,t) \varphi(z) \, dz = \int_{l}^{b} \frac{\partial}{\partial z} \big(\mathcal{N}_1^+(z,t) \varphi(z)\big) \, dz =  - \mathcal{N}_1^+(l^+,t) \varphi(l). 
\end{align*}
This implies that, due to the continuity condition \eqref{Interface Variable fI}, the evaluation of the flux variables of the respective subdomain at the interface, $\left( \frac{\partial}{\partial z}c_l^{\pm}\right) \mathcal{N}_1(x)$, cancel each other out, and equation \eqref{Conservation Law of x1} becomes (when understood in a distributional sense)
\begin{align}
	\begin{split}
	\frac{\partial}{\partial t} x_1(\cdot,t) &= \mathbf{d}_l \mathcal{N}_1(x(\cdot,t))  \\
	&= - c_l^-(\cdot) \frac{\partial}{\partial z} \mathcal{N}_1^-(x(\cdot,t)) - \frac{d}{dz} c_l^-(\cdot)\mathcal{N}_1(x(\cdot,t)) \\
	&\quad -  c_l^+(\cdot) \frac{\partial}{\partial z} \mathcal{N}_1^+(x(\cdot,t)) - \frac{d}{dz} c_l^+(\cdot) \mathcal{N}_1(x(\cdot,t)) \\
	&= - c_l^-(\cdot) \frac{\partial}{\partial z} \mathcal{N}_1^-(x(\cdot,t)) - c_l^+(\cdot) \frac{\partial}{\partial z} \mathcal{N}_1^+(x(\cdot,t)).
\end{split}
	\label{Conservation Law of x1 - Version 2}
\end{align}

Since the imposed regularity conditions imply $\mathcal{N}_1(x(\cdot,t)) \in H^1([a,b], \mathbb{R})$, we have for all $t > 0$,
\begin{align*}
    \mathbf{d}_l \mathcal{N}_1(x(\cdot,t)) = - c_l^-(\cdot) \frac{\partial}{\partial z} \mathcal{N}_1^-(x(\cdot,t)) - c_l^+(\cdot) \frac{\partial}{\partial z} \mathcal{N}_1^+(x(\cdot,t)) \in L^2([a,b], \mathbb{R}).
\end{align*} 
As a consequence, we have the following result:
\begin{proposition}
\label{prop:Balance-equation-a.e.-sense}
Under the assumptions of Proposition \ref{prop:Balance-equation-distributional-sense}, equation \eqref{Conservation Law of x1 - Version 2} holds for all $t > 0$ in the a.e. sense.
\end{proposition}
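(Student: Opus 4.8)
The plan is to upgrade the distributional identity already established in Proposition~\ref{prop:Balance-equation-distributional-sense} to a pointwise (a.e.) identity by exploiting the fact that \emph{both} sides of \eqref{Conservation Law of x1 - Version 2} are genuine $L^2$-functions for each fixed $t>0$, not merely distributions. Once this is in place, the classical fundamental lemma of the calculus of variations (equivalently, the density of $\mathcal{D}(a,b)$ in $L^2([a,b],\mathbb{R})$) forces the two sides to coincide almost everywhere.

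First I would fix $t>0$ and record the regularity of each side. On the right, since $x^\pm\in H^1(\Omega^\pm,\mathbb{R}^2)$, the spatial fluxes satisfy $\mathcal{N}_1^\pm(x(\cdot,t))\in H^1$ on the respective subinterval, so $\partial_z\mathcal{N}_1^\pm(x(\cdot,t))\in L^2$; multiplying by the bounded color functions $c_l^\pm$ and adding, the function
\begin{align*}
 g(\cdot,t) := -\,c_l^-(\cdot)\,\frac{\partial}{\partial z}\mathcal{N}_1^-(x(\cdot,t)) \;-\; c_l^+(\cdot)\,\frac{\partial}{\partial z}\mathcal{N}_1^+(x(\cdot,t))
\end{align*}
lies in $L^2([a,b],\mathbb{R})$, as already noted in the paragraph preceding the statement. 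On the left, $\partial_t x_1(\cdot,t)\in L^2([a,b],\mathbb{R})$ by the same $H^1$-regularity of $x^\pm$ in the time variable together with the aggregation \eqref{aggregate state x}.

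Next I would observe that $\mathbf{d}_l\mathcal{N}_1(x(\cdot,t))$, a priori only a distribution, coincides with the $L^2$-function $g(\cdot,t)$. This is precisely the cancellation computed just before the statement: the two singular contributions $\big(\tfrac{d}{dz}c_l^\pm\big)\mathcal{N}_1(x)$ produce the point evaluations $\mathcal{N}_1^-(l^-,t)\varphi(l)$ and $-\mathcal{N}_1^+(l^+,t)\varphi(l)$, which cancel by virtue of the continuity condition \eqref{Interface Variable fI}. Hence the distribution $\mathbf{d}_l\mathcal{N}_1(x(\cdot,t))$ carries no singular part at $z=l$ and reduces to the regular term $g(\cdot,t)\in L^2$, which is the distributional form of \eqref{Conservation Law of x1 - Version 2}.

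Finally, combining Proposition~\ref{prop:Balance-equation-distributional-sense} with this identification yields, for every $\varphi\in\mathcal{D}(a,b)$,
\begin{align*}
 \int_a^b g(z,t)\,\varphi(z)\,dz \;=\; \int_a^b \frac{\partial}{\partial t}x_1(z,t)\,\varphi(z)\,dz .
\end{align*}
Since the difference $g(\cdot,t)-\partial_t x_1(\cdot,t)\in L^2([a,b],\mathbb{R})$ annihilates every test function and $\mathcal{D}(a,b)$ is dense in $L^2([a,b],\mathbb{R})$, it must vanish a.e., which is exactly \eqref{Conservation Law of x1 - Version 2}. I expect there to be no substantial obstacle here, since the genuine work (the cancellation of the singular interface terms) is already carried out before the statement; the only delicate point is the bookkeeping guaranteeing $\partial_t x_1(\cdot,t)\in L^2$ for \emph{every} $t>0$ rather than for almost every $t$, and this is inherited from the pointwise-in-$t$ regularity already invoked in Proposition~\ref{prop:Balance-equation-distributional-sense}.
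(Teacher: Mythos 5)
Your proposal is correct and follows essentially the same route as the paper: the paper's (implicit) proof consists precisely of the cancellation of the interface point evaluations via the continuity condition \eqref{Interface Variable fI}, the observation that $\mathbf{d}_l\mathcal{N}_1(x(\cdot,t))$ therefore coincides with an $L^2$-function, and the upgrade of the distributional identity from Proposition~\ref{prop:Balance-equation-distributional-sense} to an a.e.\ identity by density of test functions. Your explicit invocation of the fundamental lemma of the calculus of variations simply makes the paper's concluding step (``As a consequence\dots'') precise.
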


This motivates the definition of the operator  $\mathbf{d}_l \colon D(\mathbf{d}_l) \subset L^2([a,b], \mathbb{R}) \to L^2([a,b], \mathbb{R})$ in the following way:
\begin{align}
	 D(\mathbf{d}_l) :=& \left\{ x \in L^2([a,b], \mathbb{R}) \mid x_{|(a,l)} \in H^1((a,l), \mathbb{R}), \, x_{|(l,b)} \in H^1((l,b), \mathbb{R}), \, x \in \mathcal{C}([a,b], \mathbb{R}) \right\} \nonumber  \\
	  =& H^1([a,b], \mathbb{R}),
  \label{Operator dl} \\
	\mathbf{d}_l x :=& - \left[ \frac{d}{dz} (c_l^- x) + \frac{d}{dz} (c_l^+ x) \right], \quad x \in D(\mathbf{d}_l) \nonumber.
\end{align}
For a function $x = c_l^- x^- + c_l^+ x^+ \in D(\mathbf{d}_l)$ we have in particular that
\begin{align*}
\mathbf{d}_l x = - c_l^- \frac{d}{dz} x^- - c_l^+ \frac{d}{dz} x^+. 
\end{align*} 
This way, the continuity equation \eqref{Interface Variable fI} of the flux variable $\mathcal{N}_1(x)$ is incorporated into the domain of the operator $\mathbf{d}_l$. 

Taking the balance equation \eqref{Interface Variable eI} into consideration, the conservation law of the state variable $x_2$ can be defined with respect to an operator closely related to $\mathbf{d}_l$. Once again, we refer to \cite{Diagne2013}, \cite[Section 5.2]{Kilian2022} for a detailed discussion. 
This admits a port-Hamiltonian formulation of these conservation laws, as we will see in the following section.

\subsection{Port-Hamiltonian Formulation}
\label{Subsection Port-Hamiltonian Formulation}
	
	Let $X = L^2([a,b], \mathbb{R}^2)$ be the state space and $l \in (a,b)$ be the position of the interface. Consider the system
	\begin{equation}
		\label{Simplified PH-System}
		\frac{\partial}{\partial t} x(z,t) = \mathcal{J}_l \big( \mathcal{Q}_l(z) x(z,t) \big), \quad z \in [a,b] \setminus \{l\}, \quad t >0, 
	\end{equation}
	where $\mathcal{J}_l$ is defined below, and $\mathcal{Q}_l = c_l^- \mathcal{Q}^- + c_l^+ \mathcal{Q}^+ \in \mathcal{L}(X)$ is a coercive matrix multiplication operator. 
 The functions $c_l^-$ and $c_l^+$ are the color functions defined in \eqref{Characteristic Functions c- and c+}.

	We endow $X$ with the inner product $\langle \cdot, \cdot \rangle_{\mathcal{Q}_l} = \frac{1}{2} \langle \cdot, \mathcal{Q}_l \cdot \rangle_{L^2}$, i.e., 
	\begin{equation}
		\langle x , y \rangle_{\mathcal{Q}_l} := \frac{1}{2} \int_{a}^{b} y^{\top}(z) \mathcal{Q}_l(z) x(z) \, dz, \quad x, y \in X,
		\label{Inner Product wrt Q0}
	\end{equation}
	with associated norm $\| \cdot \|_{\mathcal{Q}_l}$.
	Note that $\langle \cdot , \cdot \rangle_{\mathcal{Q}_l}$ is equivalent to the standard inner product $\langle \cdot , \cdot \rangle_{L^2}$ on $X$. 

	The Hamiltonian $H \colon X \to \mathbb{R}$ on the energy space $(X, \| \cdot \|_{\mathcal{Q}_l})$ is defined as
	\begin{equation}
		H(x) := \frac{1}{2} \int_{a}^{b} x^{\top}(z) \mathcal{Q}_l(z) x(z) \, dz =  \|x\|_{\mathcal{Q}_l}^2, \quad x \in X.
		\label{Simplified PH-System - Hamiltonian}
	\end{equation}
	The operator $\mathcal{J}_l \colon D(\mathcal{J}_l) \subset X \to X$ is given by
	\begin{align}
		\begin{split}
			D(\mathcal{J}_l) :=& \left\{ x = (x_1,x_2) \in X \mid x_1 \in D(\mathbf{d}_l^{\ast}), \, x_2 \in D(\mathbf{d}_l)\right\}, \\
			\mathcal{J}_l x :=& \begin{bmatrix}
				0 & \mathbf{d}_l  \\
				- \mathbf{d}_l^{\ast} & 0 
			\end{bmatrix} \begin{bmatrix}
				x_1 \\
				x_2
			\end{bmatrix}
			=
			 \begin{bmatrix}
				\mathbf{d}_l x_2 \\
				- \mathbf{d}_l^{\ast} x_1
			\end{bmatrix}, \quad x \in D(\mathcal{J}_l),
		\end{split}
		\label{Operator Jl}
	\end{align}
	where the operator $\mathbf{d}_l$ has been defined in \eqref{Operator dl}, and its uniquely defined formal adjoint\footnote{For a definition of formally adjoint operators, see \cite[Chapter 13, p. 309]{Aubin2000}.} $\mathbf{d}_l^{\ast}$ is given by
	\begin{align*}
		D(\mathbf{d}_l^{\ast}) :=& \left\{ y \in L^2([a,b], \mathbb{R}) \mid y_{|(a,l)} \in H^1((a,l), \mathbb{R}), \, y_{|(l,b)} \in H^1((l,b), \mathbb{R}) \right\}, \\
		\mathbf{d}_l^{\ast}y :=& \left( - \mathbf{d}_l - \left[ \frac{d}{dz} c_l^- + \frac{d}{dz} c_l^+ \right] \right) y, \quad y \in D(\mathbf{d}_l^{\ast}).
	\end{align*}

    \begin{remark}
    \label{Remark Jump Discontinuity}
    In contrast to the domain of $\mathbf{d}_l$, we do not require continuity of the elements in the domain of the formally adjoint operator $\mathbf{d}_l^{\ast}$. Thus, the purely distributional terms (point evaluations at the interface) do not cancel each other out, as it was the case in \eqref{Conservation Law of x1 - Version 2}. Putting the term $- \left[ \frac{d}{dz}c_l^- y + \frac{d}{dz} c_l^+ y \right]$ into the the definition of the operator $\mathbf{d}_l^{\ast}$ to make up for said discontinuity was not an arbitrary choice, though, as elaborated in \cite{Diagne2013}, \cite[Section 5.2]{Kilian2022}.
\end{remark}
Similarly to the operator $\mathbf{d}_l$, for a function $y = c_l^- y^- + c_l^+ y^+ \in D(\mathbf{d}_l^{\ast})$ it holds that
	\begin{equation*}
		\mathbf{d}_l^{\ast} y = c_l^- \frac{d}{dz} y^- + c_l^+ \frac{d}{dz} y^+.
	\end{equation*} 
	For all $x \in D(\mathbf{d}_l), \,  y \in D(\mathbf{d}_l^{\ast})$, we have the following relation between the operators $\mathbf{d}_l$ and $\mathbf{d}_l^{\ast}$:
	\begin{equation}
		\label{Relation dl and dl_ast}
		\langle \mathbf{d}_l x , y \rangle_{L^2} =  - \big[ x(z) y(z) \big]_{a}^{b} + x(l) \left[ y(l^+) - y(l^-) \right] + \langle x , \mathbf{d}_l^{\ast} y \rangle_{L^2}.
	\end{equation} 
    In particular, equation \eqref{Relation dl and dl_ast} describes the formal adjointness of $\mathbf{d}_l$ and $\mathbf{d}_l^{\ast}$, that is, the adjointness up to boundary and interface evaluations. As we will see, the first term on the right-hand side of \eqref{Relation dl and dl_ast} allows for a power flow at the boundary of the system. In particular, the energy is, in general, not conserved, and we allow the system to interact with its environment through the spatial boundary, as is the case for classical boundary port-Hamiltonian systems. The second term on the right-hand side of \eqref{Relation dl and dl_ast} stems from the interface relations \eqref{Interface Variable fI}-\eqref{Interface Variable eI} that are modeled in the definition of the operators $\mathbf{d}_l$ and $\mathbf{d}_l^{\ast}$, respectively, and corresponds to a potential power flow at the interface position. Recalling the examples for motivating the consideration of \eqref{Interface Variable fI}-\eqref{Interface Variable eI}, $x(l)$ represents the continuity at the interface of one physical quantity (current/gas velocity), while due to a jump discontinuity, the evaluations of the other physical quantity at both sides of the interface, $ y(l^+) - y(l^-)$, should not cancel each other (voltage drop across the resistor/pressure jump at the position of the piston), see Remark~\ref{Remark Jump Discontinuity}. 

	Note that on any subinterval of $[a,b]$ not containing the interface position $l \in (a,b)$, the operator $\mathcal{J}_l$ simply acts as the matrix differential operator $P_1 \frac{d}{dz}$, where $P_1$ is defined in \eqref{Matrix P1}. Thus, \eqref{Simplified PH-System} is a reformulation of \eqref{Systems of Conservation Laws}. Next, we reveal the port-Hamiltonian structure of \eqref{Simplified PH-System}.
	
	For $n\in\N$, we introduce the matrix $\Sigma_{2n}$, 	which will be used extensively in the following sections:
	\begin{equation}
		\label{Matrix Sigma}
		\Sigma_{2n} := \begin{bmatrix}
			0 & I_n \\
			I_n & 0
		\end{bmatrix} \in \mathbb{R}^{2n \times 2n}.
	\end{equation}

	In the framework of port-Hamiltonian systems, it is crucial that the associated differential operator is formally skew-symmetric. It describes the power-conserving interconnection of the system components, see, e.g., \cite{Gorrec2005}. As a first step, we show that the operator  $\mathcal{J}_l$ defined in \eqref{Operator Jl} is \emph{formally skew-symmetric}, i.e.,
	\begin{equation*}
	\langle \mathcal{J}_l x, y \rangle_{L^2} = - \langle x, \mathcal{J}_l y \rangle_{L^2}
	\end{equation*}
	for all $x, y \in D(\mathcal{J}_l)$ vanishing both at the boundary and at the interface position.
	\begin{lemma}
		The operator $\mathcal{J}_l \colon D(\mathcal{J}_l) \subset X \to X$ defined in \eqref{Operator Jl} is formally skew-symmetric.
	\end{lemma}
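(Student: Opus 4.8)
The plan is to verify the identity $\langle \mathcal{J}_l x, y\rangle_{L^2} = -\langle x, \mathcal{J}_l y\rangle_{L^2}$ by reducing everything to the single integration-by-parts formula \eqref{Relation dl and dl_ast} that relates $\mathbf{d}_l$ and $\mathbf{d}_l^{\ast}$. First I would expand both sides componentwise. Writing $x = (x_1,x_2)$ and $y = (y_1,y_2)$ with $x_1, y_1 \in D(\mathbf{d}_l^{\ast})$ and $x_2, y_2 \in D(\mathbf{d}_l)$, the left-hand side becomes $\langle \mathbf{d}_l x_2, y_1\rangle_{L^2} - \langle \mathbf{d}_l^{\ast} x_1, y_2\rangle_{L^2}$, while $-\langle x, \mathcal{J}_l y\rangle_{L^2} = -\langle x_1, \mathbf{d}_l y_2\rangle_{L^2} + \langle x_2, \mathbf{d}_l^{\ast} y_1\rangle_{L^2}$, where I use that the $L^2$ inner product is real and hence symmetric.

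Next I would apply \eqref{Relation dl and dl_ast} twice in order to transfer the derivative from $\mathbf{d}_l$ onto $\mathbf{d}_l^{\ast}$. Applying it to the pair $(x_2, y_1)$ rewrites $\langle \mathbf{d}_l x_2, y_1\rangle_{L^2}$ as $\langle x_2, \mathbf{d}_l^{\ast} y_1\rangle_{L^2}$ plus the boundary term $-[x_2 y_1]_a^b$ and the interface term $x_2(l)[y_1(l^+) - y_1(l^-)]$. Applying it to the pair $(y_2, x_1)$ and solving for $\langle y_2, \mathbf{d}_l^{\ast} x_1\rangle_{L^2} = \langle \mathbf{d}_l^{\ast} x_1, y_2\rangle_{L^2}$ expresses the remaining term of the left-hand side as $\langle \mathbf{d}_l y_2, x_1\rangle_{L^2}$ plus the analogous boundary and interface contributions. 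Substituting both back, the two genuine inner-product terms cancel exactly against $-\langle x, \mathcal{J}_l y\rangle_{L^2}$, leaving only point evaluations:
\begin{equation*}
\langle \mathcal{J}_l x, y\rangle_{L^2} + \langle x, \mathcal{J}_l y\rangle_{L^2} = -\big[x_2 y_1\big]_a^b - \big[x_1 y_2\big]_a^b + x_2(l)\big[y_1(l^+) - y_1(l^-)\big] + y_2(l)\big[x_1(l^+) - x_1(l^-)\big].
\end{equation*}

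Finally I would invoke the hypothesis that $x$ and $y$ vanish both at the boundary $\{a,b\}$ and at the interface $l$. Vanishing at $a$ and $b$ annihilates the two bracketed boundary terms, and vanishing of the $\mathbf{d}_l$-components at the interface, i.e.\ $x_2(l) = y_2(l) = 0$, annihilates the two interface terms; the right-hand side is therefore zero, which is precisely formal skew-symmetry. The only point requiring care is the bookkeeping of the interface jumps: since $D(\mathbf{d}_l^{\ast})$ does not enforce continuity at $l$, one must keep the one-sided traces $y_1(l^{\pm})$ and $x_1(l^{\pm})$ distinct throughout and rely on \eqref{Relation dl and dl_ast} to package them correctly, rather than on a naive symmetric integration by parts. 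Note that the surviving interface terms are harmless even before imposing the hypothesis, because each discontinuous trace of the $\mathbf{d}_l^{\ast}$-component is multiplied by a continuous $\mathbf{d}_l$-component whose value at $l$ is well defined and, under the hypothesis, vanishes.
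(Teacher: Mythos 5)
Your proposal is correct and follows essentially the same route as the paper: expand $\langle \mathcal{J}_l x, y\rangle_{L^2} + \langle x, \mathcal{J}_l y\rangle_{L^2}$ componentwise, apply \eqref{Relation dl and dl_ast} to the pairs $(x_2,y_1)$ and $(y_2,x_1)$, and observe that only boundary and interface evaluations survive, which vanish under the hypothesis. The only cosmetic difference is that the paper repackages the boundary terms as $\big[x^{\top}(z)P_1 y(z)\big]_a^b$, which your expression $-\big[x_2y_1\big]_a^b - \big[x_1y_2\big]_a^b$ equals.
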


	\begin{proof}
		Applying the relation \eqref{Relation dl and dl_ast} between the operators $\mathbf{d}_l$ and $\mathbf{d}_l^{\ast}$, for $x = (x_1, x_2)$, $y = (y_1, y_2) \in D(\mathcal{J}_l)$ we have
		\begin{align}
			\begin{split}
				&\quad \langle \mathcal{J}_l x , y \rangle_{L^2} + \langle x , \mathcal{J}_l y \rangle_{L^2} \\
				&= \langle \mathbf{d}_l x_2, y_1 \rangle_{L^2} - \langle \mathbf{d}_l^{\ast} x_1, y_2  \rangle_{L^2} + \langle x_1, \mathbf{d}_l y_2 \rangle_{L^2} - \langle x_2, \mathbf{d}_l^{\ast} y_1 \rangle_{L^2} \\
				&= - \big[ y_1(z) x_2(z) \big]_{a}^{b} + x_2(l) \left[ y_1(l^+) - y_1(l^-) \right] \\
				&\quad - \big[ x_1(z) y_2(z) \big]_{a}^{b} + y_2(l) \left[ x_1(l^+) - x_1(l^-) \right] \\
				&= \big[ x^{\top}(z) P_1 y(z) \big]_{a}^{b} + x_2(l) \left[ y_1(l^+) - y_1(l^-) \right]  + y_2(l) \left[ x_1(l^+) - x_1(l^-) \right],
			\end{split} 
			\label{Skew-Symmetry of J0}
		\end{align}
		with $P_1$ defined in \eqref{Matrix P1}. This proves the claim.
        \qed
	\end{proof}
	
	Next, we augment the system \eqref{Simplified PH-System} with boundary and interface port variables. Port variables are used to express the previously mentioned power-conserving interconnection between the system components (\cite[Chapter 2]{Schaft2014}, \cite[Chapter 2]{Villegas2007}). Ports consist of physical quantities whose (inner) product yields power. As the operator $\mathcal{J}_l$ is skew-symmetric up to boundary and interface evaluations, see \eqref{Skew-Symmetry of J0}, we allow for a power flow both at the spatial boundary (hence, an interaction with the system's physical environment), and at the interface position. Thus, by introducing boundary and interface port variables, we can express the power flow of the system at these positions. 
	
	Since the interface position lies in the interior of the spatial domain $[a,b]$, we may define the trace operator $\trace_l \colon D(\mathcal{J}_l) \to \mathbb{R}^4$ as follows:
	\begin{equation*}
	 \trace_l(x) := \begin{bmatrix}
			x(b) \\
			x(a)
		\end{bmatrix}, \quad x \in D(\mathcal{J}_l).
	\end{equation*}
	For all $x \in X$ such that $ \mathcal{Q}_l x \in D(\mathcal{J}_l)$, the boundary flow $f_{\partial} = f_{\partial, \mathcal{Q}_lx} \in \mathbb{R}^2$ and the boundary effort $e_{\partial} = e_{\partial, \mathcal{Q}_l x} \in \mathbb{R}^2$ are defined as
	\begin{equation}
		\label{Boundary Flow and Effort}
		\begin{bmatrix}
			f_{\partial, \mathcal{Q}_l x}\\
			e_{\partial, \mathcal{Q}_l x}
		\end{bmatrix} :=  R_{\extern} \trace_l(\mathcal{Q}_l x) = \begin{bmatrix}  \frac{1}{\sqrt{2}} (P_1 (\mathcal{Q}_l x)(b) - P_1 (\mathcal{Q}_lx)(a)) \\
			\frac{1}{\sqrt{2}} ((\mathcal{Q}_l x)(b) + (\mathcal{Q}_lx)(a))
		\end{bmatrix}, 
	\end{equation} 
	where
	\begin{equation}
		\label{Matrix Rext}
		R_{\extern} := \frac{1}{\sqrt{2}} \begin{bmatrix}
			P_1 & - P_1 \\
			I_2 & I_2 
		\end{bmatrix} \in \mathbb{R}^{4 \times 4}. 
	\end{equation}
	Now, we define the interface port variables $e_{I} = e_{I, \mathcal{Q}_l x}$, $f_I = f_{I, \mathcal{Q}_l x} \in \mathbb{R}$. For all $x \in X$ such that $\mathcal{Q}_l x \in D(\mathcal{J}_l)$, the interface flow and the interface effort are given as follows:
	\begin{align}
		f_{I, \mathcal{Q}_l x} :=& \left( \mathcal{Q}_l x \right)_2 (l^+) = \left( \mathcal{Q}_l x \right)_2(l^-), \label{Continuity Equation} \\
		-e_{I, \mathcal{Q}_l x} :=& \left( \mathcal{Q}_l x \right)_1(l^+) - \left( \mathcal{Q}_l x \right)_1 (l^-). \label{Balance Equation}
	\end{align}
	For all $ \mathcal{Q}_lx, \mathcal{Q}_ly \in D(\mathcal{J}_l)$, we may write the last line of \eqref{Skew-Symmetry of J0} with respect to the boundary and interface port variables as follows: 
	\begin{align}
		\begin{split}
			&\quad \big[ (\mathcal{Q}_lx)^{\top}(z) P_1 (\mathcal{Q}_ly)(z) \big]_{a}^{b} + \left( \mathcal{Q}_l x \right)_2(l) \left[ \left( \mathcal{Q}_l y \right)_1(l^+) - \left( \mathcal{Q}_l y \right)_1(l^-) \right]   \\
			&\quad + \left( \mathcal{Q}_l y \right)_2(l) \left[ \left( \mathcal{Q}_l x \right)_1(l^+) - \left( \mathcal{Q}_l x \right)_1(l^-) \right] \\
			&= \langle e_{\partial, \mathcal{Q}_l y} , f_{\partial, \mathcal{Q}_l x} \rangle + \langle e_{\partial, \mathcal{Q}_l x} , f_{\partial, \mathcal{Q}_l y} \rangle - f_{I, \mathcal{Q}_l x} e_{I, \mathcal{Q}_l y} - f_{I, \mathcal{Q}_l y} e_{I, \mathcal{Q}_l x}. 
		\end{split}
		\label{Auxiliary Equation Power Pairing}
	\end{align}
	Next, we simply substitute the relation \eqref{Auxiliary Equation Power Pairing} into equation \eqref{Skew-Symmetry of J0}. This yields for all $\mathcal{Q}_l x, \mathcal{Q}_l y \in D(\mathcal{J}_l)$, 
	\begin{align}
		\begin{split}
		&\quad \langle \mathcal{J}_l (\mathcal{Q}_l x), \mathcal{Q}_l y \rangle_{L^2} + \langle \mathcal{Q}_l x, \mathcal{J}_l (\mathcal{Q}_l y) \rangle_{L^2} \\
		&= \langle e_{\partial, \mathcal{Q}_l y} , f_{\partial, \mathcal{Q}_l x} \rangle + \langle e_{\partial, \mathcal{Q}_l x} , f_{\partial, \mathcal{Q}_l y} \rangle - f_{I, \mathcal{Q}_l x} e_{I, \mathcal{Q}_l y} - f_{I, \mathcal{Q}_l y} e_{I, \mathcal{Q}_l x}.
		\end{split}
		\label{Skew-Symmetry of J0 - Representation 2}
	\end{align}

	This allows us to state a (power) balance equation for classical solutions $x \in C^1([0, \infty), X)$ of the first-order system \eqref{Simplified PH-System} by means of the boundary and interface port variables.
	\begin{lemma}
		\label{Lemma Balance Equation}
  Let $x \in C^1([0, \infty), X)$ be a classical solution of the system \eqref{Simplified PH-System} with Hamiltonian \eqref{Simplified PH-System - Hamiltonian}. Then we have for all $t \geq 0$,
	
		\begin{equation}
			\label{Power Balance Equation wrt Boundary and Interface Port}
			\frac{d}{dt} \|x(t)\|_{\mathcal{Q}_l}^2 =	\frac{d}{dt}H(x(t)) =  \langle e_{\partial}(t), f_{\partial}(t) \rangle - e_I(t) f_I(t),
		\end{equation}
	\end{lemma}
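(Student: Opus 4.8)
The plan is to differentiate the Hamiltonian along the trajectory and then feed the result into the power-pairing identity \eqref{Skew-Symmetry of J0 - Representation 2}, evaluated at $y = x$. First I would exploit that the energy norm is induced by the real symmetric bilinear form $\langle\cdot,\cdot\rangle_{\mathcal{Q}_l}$ of \eqref{Inner Product wrt Q0}. Since $x\in C^1([0,\infty),X)$, the product rule for this bilinear form together with its symmetry gives
\[
\frac{d}{dt}\|x(t)\|_{\mathcal{Q}_l}^2 = 2\langle \dot{x}(t),x(t)\rangle_{\mathcal{Q}_l} = \langle \dot{x}(t),\mathcal{Q}_l x(t)\rangle_{L^2},
\]
where the last equality is just the definition $\langle\cdot,\cdot\rangle_{\mathcal{Q}_l}=\tfrac12\langle\cdot,\mathcal{Q}_l\,\cdot\rangle_{L^2}$ combined with the symmetry of $\mathcal{Q}_l$.

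Next, because $x$ is a classical solution of \eqref{Simplified PH-System}, we have $\mathcal{Q}_l x(t)\in D(\mathcal{J}_l)$ and $\dot{x}(t)=\mathcal{J}_l(\mathcal{Q}_l x(t))$ for every $t\geq 0$. Substituting this identity yields
\[
\frac{d}{dt}\|x(t)\|_{\mathcal{Q}_l}^2 = \langle \mathcal{J}_l(\mathcal{Q}_l x(t)),\mathcal{Q}_l x(t)\rangle_{L^2}.
\]
I would then apply \eqref{Skew-Symmetry of J0 - Representation 2} with $y=x$. Since the $L^2$ inner product is real and symmetric, the two summands on its left-hand side coincide and sum to $2\langle \mathcal{J}_l(\mathcal{Q}_l x),\mathcal{Q}_l x\rangle_{L^2}$; on the right-hand side the two boundary pairings coincide and so do the two interface products. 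Dividing by two and writing $e_\partial=e_{\partial,\mathcal{Q}_l x}$, $f_\partial=f_{\partial,\mathcal{Q}_l x}$, $e_I=e_{I,\mathcal{Q}_l x}$, $f_I=f_{I,\mathcal{Q}_l x}$ as in \eqref{Boundary Flow and Effort}, \eqref{Continuity Equation}, and \eqref{Balance Equation} produces precisely \eqref{Power Balance Equation wrt Boundary and Interface Port}.

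The computation itself is short, so the only point demanding genuine care is the admissibility of the trajectory, namely that $\mathcal{Q}_l x(t)\in D(\mathcal{J}_l)$ for each $t$, which is exactly what makes the port variables and the identity \eqref{Skew-Symmetry of J0 - Representation 2} applicable at all. This is built into the notion of a classical solution of \eqref{Simplified PH-System}, since the right-hand side $\mathcal{J}_l(\mathcal{Q}_l x(t))$ must be meaningful. Hence the main obstacle is not an estimate but rather bookkeeping of where the boundary and interface evaluations live; once \eqref{Skew-Symmetry of J0 - Representation 2} is invoked, everything reduces to the symmetry of the two real inner products and the product rule in $t$.
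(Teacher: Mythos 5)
Your proposal is correct and follows essentially the same route as the paper: differentiate $H(x(t))=\frac12\langle x(t),\mathcal{Q}_l x(t)\rangle_{L^2}$ via the product rule, substitute $\dot{x}=\mathcal{J}_l(\mathcal{Q}_l x)$, and invoke the power-pairing identity \eqref{Skew-Symmetry of J0 - Representation 2} with $y=x$. The only difference is cosmetic bookkeeping of the factor $\tfrac12$ and the symmetry of the real inner product, which the paper leaves implicit.
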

where $e_{\partial}(t) = e_{\partial, \mathcal{Q}_lx(t)}$, $f_{\partial}(t) = f_{\partial, \mathcal{Q}_lx(t)}$, $e_I(t) = e_{I, \mathcal{Q}_lx(t)}$, and $f_I(t) = f_{I, \mathcal{Q}_lx(t)}$.

\begin{proof}
		By exploiting the fact that $x$ satisfies \eqref{Simplified PH-System} and by using \eqref{Skew-Symmetry of J0 - Representation 2}, we get for all $t \geq 0$, 
		\begin{align*}
			\frac{d}{dt}H(x(t)) &= \frac{1}{2} \frac{d}{dt} \langle x(t), \mathcal{Q}_lx(t) \rangle_{L^2} \\
			&= \frac{1}{2} \langle \mathcal{J}_l( \mathcal{Q}_lx(t) ), \mathcal{Q}_l x(t) \rangle_{L^2} + \frac{1}{2} \langle \mathcal{Q}_l x(t), \mathcal{J}_l(\mathcal{Q}_lx(t)) \rangle_{L^2} \\
			&= \langle e_{\partial}(t), f_{\partial}(t) \rangle - e_I(t) f_I(t).
		\end{align*}
        \qed
\end{proof}
	Equation \eqref{Power Balance Equation wrt Boundary and Interface Port} states that the change of energy is equal to the power flow both at the boundary and at the interface position.
	
\begin{remark}[Dirac Structure of \eqref{Simplified PH-System}]
Note that the existence of an underlying, power-conserving interconnection structure, called a (Stokes-)Dirac structure, is constitutive for port-Hamiltonian systems \cite[Chapter 2]{Schaft2014}, \cite{Gorrec2005},  \cite[Chapter 2]{Villegas2007}. It allows us to geometrically specify the dynamics of the associated system by means of its constituting port variables. However, in this work, we will not address this topic. The underlying Dirac structure of the system \eqref{Simplified PH-System} has been defined in \cite[Subsection 5.4.1]{Kilian2022}.
\end{remark}
		
	Now that we have established the port-Hamiltonian formulation of the system \eqref{Simplified PH-System}, we want to discuss under which conditions this system is well-posed. In the subsequent sections, we will assume (without loss of generality) that the fixed interface position is given by $l = 0$.

	\section{Generation of a Contraction Semigroup}
	\label{Section Generation of a Contraction Semigroup}

	In this section, we characterize the boundary and interface conditions under which the system \eqref{Simplified PH-System} generates a contraction semigroup. In particular, this
    guarantees well-posedness in the sense of \cite[Definition II.6.8]{Engel2000}.
	 To this end, we investigate the port-Hamiltonian operator associated with the system \eqref{Simplified PH-System} with Hamiltonian \eqref{Simplified PH-System - Hamiltonian}. In \cite[Section 4]{Gorrec2005}, it has been shown that 
   boundary port variables can be used to characterize those boundary conditions for which the associated port-Hamiltonian operator generates a contraction semigroup. With this approach in mind, we will introduce interface and boundary conditions specified by the interface, respectively, boundary port variables. These conditions will be incorporated into the definition of the domain of the port-Hamiltonian operator. 
  It will then turn out that the problem admits a reformulation as a pure boundary port-Hamiltonian system of the type studied in \cite{Jacob2012}. Using this reformulation, we show in Theorem~\ref{Theorem A Generates a Contraction Semigroup} that the port-Hamiltonian operator from \eqref{Simplified PH-System} generates a contraction semigroup if and only if certain  passivity conditions both at the spatial boundary and at the interface position hold. Furthermore, criteria are presented that guarantee the generation of an isometric semigroup by the port-Hamiltonian operator.

\subsection{Boundary and Interface Conditions}

	We begin by introducing boundary conditions for the system \eqref{Simplified PH-System}. Following \cite[Section 4]{Gorrec2005}, we impose linear boundary conditions of the form 
	\begin{equation}
		\tilde{W}_B \begin{bmatrix}
			(\mathcal{Q}_0x)(b) \\
			(\mathcal{Q}_0x)(a) 
		\end{bmatrix} = 0
		\label{Boundary Condition}
	\end{equation}
	for some $\tilde{W}_B \in \mathbb{R}^{2 \times 4}$. Since the matrix  $R_{\extern}$ defined in \eqref{Matrix Rext} is invertible, we may write \eqref{Boundary Condition} equivalently in terms of the boundary port variables \eqref{Boundary Flow and Effort} as
	\begin{equation}
		W_B \begin{bmatrix}
			f_{\partial, \mathcal{Q}_0x} \\
			e_{\partial, \mathcal{Q}_0 x}
		\end{bmatrix} = 0, 
		\qquad W_B := \tilde{W}_B R_{\extern}^{-1} \in \mathbb{R}^{2 \times 4}.
		\label{Boundary Condition wrt Boundary Flow and Effort}
	\end{equation}
	
	Next, we impose an \emph{interface relation} upon the interface port variables \eqref{Continuity Equation}-\eqref{Balance Equation}. For a fixed parameter $r \in \mathbb{R}$, it is given by
	\begin{equation}
		\label{Interface Passivity Relation}
		f_{I, \mathcal{Q}_0 x} = r e_{I, \mathcal{Q}_0 x}.
	\end{equation}
	Note that due to the minus sign on the right-hand side of the balance equation \eqref{Power Balance Equation wrt Boundary and Interface Port}, the interface relation \eqref{Interface Passivity Relation} is in fact a passivity relation with passivity constant $r$ (provided $r \geq 0$), see \cite[Section 2.4]{Schaft2014}. Recall that the interface port variables are defined by
	\begin{align}
	\label{eq:Interface-variables-at-0}
		f_{I, \mathcal{Q}_0 x} &= \left( \mathcal{Q}^+x\right)_2 (0^+) = \left( \mathcal{Q}^- x \right)_2 (0^-), \\
		-e_{I, \mathcal{Q}_0 x} &= \left( \mathcal{Q}^+x \right)_1 (0^+) - \left( \mathcal{Q}^- x \right)_1 (0^-). \nonumber
    \end{align}
	With these boundary and interface conditions, we denote by  $A_{\mathcal{Q}_0} \colon D(A_{\mathcal{Q}_0}) \subset X \to X$ the port-Hamiltonian operator associated with the system \eqref{Simplified PH-System} with Hamiltonian \eqref{Simplified PH-System - Hamiltonian}. It is given by
	\begin{align}
		\begin{split}
			D(A_{\mathcal{Q}_0}) :=& \left\{ x \in X \, \big| \, \mathcal{Q}_0x \in D(\mathcal{J}_0), \, f_{I, \mathcal{Q}_0x} = r e_{I, \mathcal{Q}_0x}, \, W_B \begin{bmatrix}
				f_{\partial, \mathcal{Q}_0 x} \\
				e_{\partial, \mathcal{Q}_0 x}
			\end{bmatrix} = 0 \right\}, \\
			A_{\mathcal{Q}_0}x :=& \mathcal{J}_0 (\mathcal{Q}_0x), \quad  x \in D(A_{\mathcal{Q}_0}).
		\end{split}
		\label{Simplified PH System - Operator A}
	\end{align}
	We wish to characterize the boundary and interface conditions (i.e., those $W_B$ and $r$) for which $A_{\mathcal{Q}_0}$ generates a contraction semigroup $(T(t))_{t \geq 0}$ on the energy space $(X, \| \cdot \|_{\mathcal{Q}_0})$. 
	In particular, such conditions guarantee the well-posedness of the abstract Cauchy problem
		\begin{align}
			\begin{split}
				\dot{x}(t) &= A_{\mathcal{Q}_0}x(t), \quad t > 0, \\
				x(0) &= x_0 \in D(A_{\mathcal{Q}_0}),
			\end{split}
			\label{Theorem A Generates a Contraction Semigroup - ACP}
		\end{align}
		associated with the operator $A_{\mathcal{Q}_0}$.
	
	\subsection{A Reformulation as a Boundary Port-Hamiltonian System}
	
	To characterize well-posedness of our port-Hamiltonian system with interface, we reformulate it in terms of a system without interface in the framework of \cite{Jacob2012}. This allows to use the well-posedness criteria for port-Hamiltonian systems developed in \cite{Jacob2012} for the analysis of our model.  We continue to assume $l=0$.
	
	Let $\eta := -a/b > 0$ and consider the transformation $w_1(\zeta,t) := x^-(\eta \zeta +a,t)$, $\zeta\in[0,b]$. With the transformation $z = \eta \zeta +a$, the differential equation for $x^-$ on $[a,0]$ given by
	\begin{equation}
	    \frac{\partial}{\partial t} x^-(z,t) = P_1  \frac{\partial}{\partial z} \left(
	    \mathcal{Q}^-(z) x^-(z,t) \right), \quad z\in (a,0), \quad t>0,
	\end{equation}
	transforms to the equation on $(0,b)$ given by
	\begin{equation}
	    \frac{\partial}{\partial t} w_1(\zeta,t) = P_1  \frac{\partial}{\partial \zeta} \left( \frac{1}{\eta}
	    \mathcal{Q}^-(\eta\zeta +a) w_1(\zeta,t) \right), \quad \zeta\in (0,b), \quad t>0.
	\end{equation}
	
	We now introduce a system of four first-order PDEs on $[0,b]$ that consists of copies of the original system on $[a,0]$ (in a transformed form) and $[0,b]$. The system is given by
	\begin{equation}
	    \frac{\partial}{\partial t} w(\zeta,t) = \hat{P}_1  \frac{\partial}{\partial \zeta} \left(
      \mathcal{H}_0w\right)(\zeta,t), \quad t>0,\quad \zeta \in (0,b),
	\end{equation}
	where $\hat{P}_1 := \dg (P_1,P_1)\in \R^{4\times 4}$, and
	\[
	\mathcal{H}_0(\zeta):= \dg (\eta^{-1}\mathcal{Q^-}(\eta \zeta + a),\mathcal{Q}^+(\zeta)),\quad \zeta\in[0,b].
	\]
	Let $\hat{X}:= L^2([0,b],\R^4)$ be endowed with the scalar product $\langle \cdot,\cdot \rangle_{\mathcal{H}_0}$ (defined analogously to \eqref{Inner Product wrt Q0}) and consider the map 

\begin{subequations}
 \begin{align}
        \label{eq:T-map-1}
	   S \colon \hat{X}\to X, \quad w &= \begin{bmatrix}
	        w_1 \\ w_2
	    \end{bmatrix} \mapsto x:= \frac{1}\eta{}c^- w_1(\eta^{-1}\cdot+b)
	    + c^+ w_2,\\
     \label{eq:T-map-2}
	    S^{-1} \colon X \to \hat{X}, \quad x &= c^- x^-  
	    + c^+ x^+ \mapsto
	    w := \begin{bmatrix}
	        \eta x^-(\eta \cdot +a) \\ x^+
	    \end{bmatrix}. 
	\end{align}
\end{subequations}
	The relation between $X$ and $\hat{X}$ is established via the following lemma:
	\begin{lemma}
	\label{lem:Isometric-isomorphism}
	$S$ induces an isometric isomorphism of the Hilbert spaces $(X,\|\cdot\|_{\mathcal{Q}_0})$ and $(\hat{X},\|\cdot\|_{\mathcal{H}_0})$.    
	\end{lemma}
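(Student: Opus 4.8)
The plan is to verify directly that the two maps in \eqref{eq:T-map-1} and \eqref{eq:T-map-2} are well-defined, linear, and mutually inverse, and then to show that $S$ preserves the norm. Since $S$ is linear, norm preservation implies preservation of the inner product by the polarization identity, so this is enough to conclude that $S$ is an isometric isomorphism.

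First I would check that $S$ indeed maps $\hat{X}$ into $X$ and that $S^{-1}$ is its inverse. The crucial arithmetic identity here is $\eta^{-1} a = -b$, which is immediate from $\eta = -a/b$. Using it, for $\zeta \in [0,b)$ the argument $\eta^{-1}(\eta\zeta + a) + b = \zeta$, so a direct substitution gives $S^{-1}Sw = w$ and $SS^{-1}x = x$; linearity of both maps is clear from their definitions. In particular, the map $z \mapsto \eta^{-1} z + b$ sends $[a,0)$ onto $[0,b)$, so $\eta^{-1} c^-\, w_1(\eta^{-1}\cdot + b)$ is a well-defined $L^2$-function supported on $[a,0)$, and $c^+ w_2$ is supported on $(0,b]$; hence $Sw \in X$.

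The heart of the proof is the isometry. Writing $x = Sw$, so that $x = c^- x^- + c^+ x^+$ with $x^-(z) = \eta^{-1} w_1(\eta^{-1} z + b)$ and $x^+ = w_2$, I would split the defining integral of the squared norm along the interface:
\begin{equation*}
\|x\|_{\mathcal{Q}_0}^2 = \frac{1}{2}\int_a^0 (x^-)^\top \mathcal{Q}^- x^-\, dz + \frac{1}{2}\int_0^b (x^+)^\top \mathcal{Q}^+ x^+\, dz.
\end{equation*}
On $(0,b]$ we have $x^+ = w_2$ and the integrand already coincides with the contribution of the second diagonal block $\mathcal{Q}^+$ of $\mathcal{H}_0$. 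On $[a,0)$ I would apply the change of variables $z = \eta\zeta + a$, $dz = \eta\, d\zeta$, which maps $[a,0)$ onto $[0,b)$ and turns $x^-(z)$ into $\eta^{-1} w_1(\zeta)$. The two factors $\eta^{-1}$ from the two occurrences of $x^-$ combine with the Jacobian $\eta$ to leave precisely the single factor $\eta^{-1}$ appearing in the first diagonal block $\eta^{-1}\mathcal{Q}^-(\eta\zeta + a)$ of $\mathcal{H}_0$. Thus this term equals the contribution of the first block, and $\|x\|_{\mathcal{Q}_0}^2 = \|w\|_{\mathcal{H}_0}^2$.

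The only real obstacle is the careful bookkeeping of the scaling factors $\eta^{\pm 1}$, so that the Jacobian of the substitution and the normalization built into $S$ exactly reproduce the weight $\mathcal{H}_0$; once these are tracked, the equality of norms is a one-line consequence of the change of variables, and the isometric isomorphism follows.
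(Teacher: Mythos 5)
Your proof is correct and follows essentially the same route as the paper: the norm identity is obtained by splitting the integral at the interface and applying the change of variables $z=\eta\zeta+a$, with the Jacobian $\eta$ and the normalization factors $\eta^{\pm1}$ built into $S$ and $S^{-1}$ combining to produce the weight $\eta^{-1}\mathcal{Q}^-(\eta\zeta+a)$ in $\mathcal{H}_0$. The only cosmetic differences are that the paper computes the full inner product $\langle x,y\rangle_{\mathcal{Q}_0}=\langle S^{-1}x,S^{-1}y\rangle_{\mathcal{H}_0}$ directly rather than invoking polarization, and leaves the verification that $S$ and $S^{-1}$ are mutually inverse implicit.
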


	\begin{proof}
 For $x,y\in X$, denote $w:=S^{-1}y, v:=S^{-1}x$, where $S^{-1}$ is given by \eqref{eq:T-map-2}. We have
	\begin{align*}
		2 \langle x , y \rangle_{\mathcal{Q}_0} &=  \int_{a}^{b} y^{\top}(z) \mathcal{Q}_0(z) x(z) \, dz \\
		&= \int_{a}^{0} y^{-,\top}(z) \mathcal{Q}^-(z) x^-(z) \, dz
		+ \int_{0}^{b} y^{+,\top}(z) \mathcal{Q}^+(z) x^+(z) \, dz.
	\end{align*}		
Substituting $z= \eta \zeta +a$, $ dz=  \eta \, d\zeta$ in the first integral and writing $\zeta:=z$ in the second integral, we see that
	\begin{align*}
		2 &\langle x , y \rangle_{\mathcal{Q}_0}  
		= \int_{0}^{b} y^{-,\top}(\eta \zeta +a)  \mathcal{Q}^-(\eta\zeta +a) x^-(\eta\zeta +a) \eta \, d\zeta
		+ \int_{0}^{b} y^{+,\top}(\zeta) \mathcal{Q}^+(\zeta) x^+(\zeta) \, d\zeta  \\
		&= \int_0^b \begin{bmatrix}
		     y^{-,\top}(\eta \zeta +a) & y^{+,\top}(\zeta)
		\end{bmatrix} \begin{bmatrix}
		    \eta  \mathcal{Q}^-(\eta \zeta +a) & 0  \\
		    0 & \mathcal{Q}^+(\zeta)
		\end{bmatrix} \begin{bmatrix}
		    x^{-}(\eta \zeta +a) \\ x^{+}( \zeta)
		\end{bmatrix} d\zeta \\
	&=	\int_0^b
        \begin{bmatrix}
		     w_1(\zeta) \\ w_2(\zeta)
		\end{bmatrix}^{\top} \begin{bmatrix}
		    \eta^{-1}  \mathcal{Q}^-(\eta \zeta +a) & 0 \\
		    0 & \mathcal{Q}^+(\zeta)
		\end{bmatrix} 
		\begin{bmatrix}
		     v_1(\zeta) \\ v_2(\zeta)
		\end{bmatrix} d\zeta = 2 \left\langle \begin{bmatrix}
		     w_1 \\ w_2
		\end{bmatrix},\begin{bmatrix}
		     v_1 \\ v_2
		\end{bmatrix}\right\rangle_{\mathcal{H}_0}.
	\end{align*}	
    \qed
	\end{proof}
	
	 Recall $r$ from the interface condition \eqref{Interface Passivity Relation} and define the auxiliary matrices
	\begin{equation}
	    U_1 := \begin{bmatrix}
	        - r & 1 \\ 0 & 1
	    \end{bmatrix}, \qquad U_2 := \begin{bmatrix}
	        r & 0 \\ 0 & -1
	    \end{bmatrix}.
	\end{equation}
	With \eqref{eq:Interface-variables-at-0}, it is easy to see that the conditions \eqref{Continuity Equation} and \eqref{Balance Equation} together with the interface condition $f_I = re_I$ are equivalent to
	\begin{equation*}
	    U_1 (\mathcal{Q}^-x^-)(0^-) + U_2 (\mathcal{Q}^+x^+)(0^+) = 0.
	\end{equation*}
	For $\tilde{W}_B \in \R^{2\times 4}$ and $r\in \R$, introduce the boundary condition matrix $\tilde{W}_{B,c}$ by
	\begin{equation}
	    \tilde{W}_{B,c} := \begin{bmatrix}
	        U_1 & 0_{2\times 4} & U_2 \\
	        0_{2\times 2} & \tilde{W}_B & 0_{2\times 2}
	    \end{bmatrix} \in \R^{4 \times 8}.
	\end{equation}
	Note that with this definition and the relation $w = S^{-1}x$, we have that
	\begin{align*}
	   \tilde{W}_{B,c} \begin{bmatrix}
	       (\mathcal{H}_0 w)(b) \\ (\mathcal{H}_0 w)(0)
	   \end{bmatrix} 
	   = \tilde{W}_{B,c} \begin{bmatrix}
	       (\mathcal{H}_0 w)_1(b) \\ (\mathcal{H}_0 w)_2(b) \\ (\mathcal{H}_0 w)_1(0) \\ (\mathcal{H}_0 w)_2(0)
	   \end{bmatrix} 
	   &=  \tilde{W}_{B,c} \begin{bmatrix}
	       (\mathcal{Q}^-x^-)(0^-) \\ (\mathcal{Q}^+x^+)(b) \\ (\mathcal{Q}^-x^-)(a) \\ (\mathcal{Q}^+x^+)(0^+)
	   \end{bmatrix} 	  
	     \\
	   &=\begin{bmatrix}
	     U_1 (\mathcal{Q}^- x^-)(0^-)+ 
	     U_2 (\mathcal{Q}^+ x^+)(0^+)\\
	     \tilde{W}_B \begin{bmatrix}
	         (\mathcal{Q}^+ x^+)(b) \\ 
	         (\mathcal{Q}^- x^-)(a)
	     \end{bmatrix}
	   \end{bmatrix}.
	\end{align*}
	In particular, the boundary condition
	$\tilde{W}_{B,c} \begin{bmatrix}
	    (\mathcal{H}_0 w)(b) \\ (\mathcal{H}_0 w)(0)
	\end{bmatrix}= 0$ is equivalent to the conditions \eqref{Continuity Equation}, \eqref{Balance Equation}, $f_I = re_I$, and $\tilde{W}_B\begin{bmatrix}
	    (\mathcal{Q}^+x^+)(b)\\(\mathcal{Q^-}x^-)(a)
	\end{bmatrix} = 0$ under the transformation $S$.

	The transformation $\hat{R}_{\extern}$ associated to $\hat{P}_1$ is given as before by
	\begin{equation}
	    \hat{R}_{\extern} := \frac{1}{\sqrt{2}} \begin{bmatrix}
	        \hat{P}_1 & - \hat{P}_1 \\
	        I_4 & I_4
	    \end{bmatrix} \in \R^{8\times 8}.
	\end{equation}
	Again, we define $W_{B,c} := \tilde{W}_{B.c} \hat{R}_{\extern}^{-1}$. 
    A direct computation shows that $\hat{R}_{\extern}^{-1} = \hat{R}_{\extern}^{\top}$.	Also, in view of \cite[Lemma 7.2.2]{Jacob2012}, it holds that 
    \begin{align}
    \label{eq:RSigma}
    \hat{R}_{\extern}^{\top}\Sigma_8\hat{R}_{\extern} = \begin{bmatrix}
	        \hat{P}_1 & 0 \\
	        0 & - \hat{P}_1
	    \end{bmatrix}. 
    \end{align}
    The following observation is required for the final result.
	\begin{lemma} 
	\label{lem:WB-WBc}
	The following two statements are equivalent:
	\begin{enumerate}
	    \item[(i)] $W_B \Sigma_4 W_B^{\top}\geq 0$ and $r\geq0$.
	    \item[(ii)] $W_{B,c} \Sigma_8 W_{B,c}^{\top} \geq 0$.
	\end{enumerate}
	\end{lemma}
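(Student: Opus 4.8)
The plan is to reduce both conditions to positive-semidefiniteness statements about a single $4\times4$ matrix built from $\tilde W_{B,c}$, and then to exploit the block structure of $\tilde W_{B,c}$ to decouple the interface part (governed by $r$) from the boundary part (governed by $W_B$).

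First I would eliminate $\hat R_{\extern}$. Since $W_{B,c}=\tilde W_{B,c}\hat R_{\extern}^{-1}$ and $\hat R_{\extern}^{-1}=\hat R_{\extern}^{\top}$, I compute
\[
W_{B,c}\Sigma_8 W_{B,c}^{\top}=\tilde W_{B,c}\,\hat R_{\extern}^{\top}\Sigma_8\hat R_{\extern}\,\tilde W_{B,c}^{\top}=\tilde W_{B,c}\begin{bmatrix}\hat P_1 & 0\\ 0 & -\hat P_1\end{bmatrix}\tilde W_{B,c}^{\top},
\]
using \eqref{eq:RSigma}. Writing $M:=\dg(\hat P_1,-\hat P_1)=\dg(P_1,P_1,-P_1,-P_1)$ as a block-diagonal matrix of $2\times2$ blocks (in the ordering induced by the vector $[(\mathcal H_0 w)(b);(\mathcal H_0 w)(0)]$ that defines the columns of $\tilde W_{B,c}$), I then split $\tilde W_{B,c}$ into its two row blocks $R_1=[\,U_1\ 0_{2\times4}\ U_2\,]$ and $R_2=[\,0_{2\times2}\ \tilde W_B\ 0_{2\times2}\,]$ and evaluate the $2\times2$ blocks of $\tilde W_{B,c}M\tilde W_{B,c}^{\top}$ separately.

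The crucial structural observation is that the column support of $R_1$ (the first and last pair of columns) is disjoint from that of $R_2$ (the middle four columns). Consequently the mixed blocks $R_1 M R_2^{\top}$ and $R_2 M R_1^{\top}$ vanish, so
\[
W_{B,c}\Sigma_8 W_{B,c}^{\top}=\begin{bmatrix}U_1 P_1 U_1^{\top}-U_2 P_1 U_2^{\top} & 0\\ 0 & \tilde W_B\,\dg(P_1,-P_1)\,\tilde W_B^{\top}\end{bmatrix}.
\]
For the lower-right block I would invoke the $4\times4$ analogue of \eqref{eq:RSigma}, namely $R_{\extern}^{\top}\Sigma_4 R_{\extern}=\dg(P_1,-P_1)$ (again \cite[Lemma 7.2.2]{Jacob2012}, which also gives $R_{\extern}^{-1}=R_{\extern}^{\top}$); together with $W_B=\tilde W_B R_{\extern}^{-1}$ this yields exactly $\tilde W_B\,\dg(P_1,-P_1)\,\tilde W_B^{\top}=W_B\Sigma_4 W_B^{\top}$. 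For the upper-left block a short direct computation with $U_1,U_2,P_1$ gives $U_1 P_1 U_1^{\top}-U_2 P_1 U_2^{\top}=\dg(2r,0)$.

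Finally, since a block-diagonal symmetric matrix is positive semidefinite precisely when each diagonal block is, condition (ii) is equivalent to $\dg(2r,0)\ge0$ together with $W_B\Sigma_4 W_B^{\top}\ge0$; the former holds iff $r\ge0$, which is precisely (i). I expect no conceptual obstacle: the only things to watch are the bookkeeping of the column ordering (so that $M$ is correctly identified with $\dg(P_1,P_1,-P_1,-P_1)$ in the ordering matching the blocks of $\tilde W_{B,c}$, and so that the disjoint-support argument is valid) and the short but error-prone $2\times2$ computation producing $\dg(2r,0)$.
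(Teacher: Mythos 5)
Your proposal is correct and follows essentially the same route as the paper's proof: eliminate $\hat R_{\extern}$ via \eqref{eq:RSigma}, observe that the disjoint column supports of the two row blocks of $\tilde W_{B,c}$ make $\tilde W_{B,c}\dg(P_1,P_1,-P_1,-P_1)\tilde W_{B,c}^{\top}$ block diagonal with blocks $U_1P_1U_1^{\top}-U_2P_1U_2^{\top}=\dg(2r,0)$ and $W_B\Sigma_4W_B^{\top}$, and conclude from positive semidefiniteness blockwise. The only (welcome) difference is that you make explicit the identity $\tilde W_B\dg(P_1,-P_1)\tilde W_B^{\top}=W_B\Sigma_4W_B^{\top}$ via the $4\times4$ analogue of \eqref{eq:RSigma}, a step the paper leaves implicit.
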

	\begin{proof}
	Using \eqref{eq:RSigma}, we obtain that 
	\begin{align*}
	    W_{B,c} \Sigma_8 W_{B,c}^{\top} 
	    &= \tilde{W}_{B,c}\hat{R}_{\extern}^{-1}\Sigma_8 (\hat{R}_{\extern}^{-1})^{\top}  \tilde{W}^{\top}_{B,c}
	    = \tilde{W}_{B,c}\hat{R}_{\extern}^{\top}\Sigma_8 \hat{R}_{\extern} \tilde{W}^{\top}_{B,c}\\
	    &= \tilde{W}_{B,c} \dg (P_1,P_1,-P_1,-P_1) \tilde{W}^{\top}_{B,c} \\
            &=
	    \dg (U_1 P_1 U_1^{\top} - U_2 P_1 U_2^{\top}, W_B \Sigma_4 W_B^{\top} ). 
	\end{align*}	
	The assertion thus depends on the definiteness of
	\begin{align*}
     U_1 P_1 U_1^{\top} - U_2 P_1 U_2^{\top} &=
	    \begin{bmatrix}
	        - r & 1 \\ 0 & 1
	    \end{bmatrix} 
	    \begin{bmatrix}
	        0 & -1 \\ -1 & 0
	    \end{bmatrix} \begin{bmatrix}
	        - r & 0 \\ 1 & 1
	    \end{bmatrix} + \begin{bmatrix}
	        r & 0 \\ 0 & -1
	    \end{bmatrix} \begin{bmatrix}
	        0 & 1 \\ 1 & 0
	    \end{bmatrix}\begin{bmatrix}
	        r & 0 \\ 0 & -1
	    \end{bmatrix} \\
	    &=
	    \begin{bmatrix}
	        2r & r\\ r & 0
	    \end{bmatrix} + \begin{bmatrix}
	        0 & -r \\ -r & 0 
	    \end{bmatrix} = \begin{bmatrix}
	        2r & 0 \\ 0 & 0
	    \end{bmatrix}.
	\end{align*}
	The latter matrix is clearly positive semidefinite if and only if $r\geq 0$.
    \qed
	\end{proof}
	
	We will now study the $C_0$-semigroups associated to the operators 
$A_{\mathcal{Q}_0}$ introduced in \eqref{Simplified PH System - Operator A}: 
\begin{align}
		\begin{split}
			D(A_{\mathcal{Q}_0}) &= \left\{ x \in X \, \big| \, \mathcal{Q}_0x \in D(\mathcal{J}_0), \, f_{I, \mathcal{Q}_0x} = r e_{I, \mathcal{Q}_0x}, \, W_B \begin{bmatrix}
				f_{\partial, \mathcal{Q}_0 x} \\
				e_{\partial, \mathcal{Q}_0 x}
			\end{bmatrix} = 0 \right\}, \\
			A_{\mathcal{Q}_0}x &= \mathcal{J}_0 (\mathcal{Q}_0x), \quad  x \in D(A_{\mathcal{Q}_0}),
		\end{split}
		\label{def:A_Q-v2}
	\end{align}
and $A_{\mathcal{H}_0}$ defined next:
    \begin{align}
		\begin{split}
			D(A_{\mathcal{H}_0}) :=& \left\{ x \in \hat{X} \, \big| \, \mathcal{H}_0x \in 
			H^1([0,b], \R^4), \, W_{B .c}\begin{bmatrix}
				f_{\partial, \mathcal{H}_0 x} \\
				e_{\partial, \mathcal{H}_0 x}
			\end{bmatrix} = 0 \right\}, \\
			A_{\mathcal{H}_0}x :=& 
			P_1 \frac{d}{d \zeta}(\mathcal{H}_0x), \quad  x \in D(A_{\mathcal{H}_0}).
		\end{split}
		\label{def:A_H}
	\end{align}

	We note the following simple observation, which follows directly from the previous arguments in this section:
	\begin{lemma}
	    \label{lem:Asimilarity}
	    The operators $A_{\mathcal{Q}_0}$ and  $A_{\mathcal{H}_0}$ are similar via the isometric isomorphism $S$, i.e.,
	    $S D(A_{\mathcal{H}_0}) = D(A_{\mathcal{Q}_0})$
	    and for all $x\in D(A_{\mathcal{H}_0})$ we have
	   $S^{-1} A_{\mathcal{Q}_0} S x = 
	        A_{\mathcal{H}_0} x$.
	    	\end{lemma}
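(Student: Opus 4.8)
The plan is to verify the two assertions of the lemma separately, assembling the ingredients already collected in this section: first the set equality $S\,D(A_{\mathcal{H}_0}) = D(A_{\mathcal{Q}_0})$, and then the intertwining identity $A_{\mathcal{Q}_0}Sw = S A_{\mathcal{H}_0}w$ for every $w \in D(A_{\mathcal{H}_0})$ (where $w$ denotes the element called $x$ in the statement). Throughout I fix $w = (w_1,w_2) \in \hat X$ and set $x = Sw = c^- x^- + c^+ x^+$, so that by \eqref{eq:T-map-1}--\eqref{eq:T-map-2} one has $x^-(z) = \eta^{-1} w_1(\eta^{-1}z + b)$ on $(a,0)$ and $x^+ = w_2$ on $(0,b)$; note that $z = \eta\zeta + a$ is an affine diffeomorphism of $[0,b]$ onto $[a,0]$ sending $\zeta=0$ to $z=a$ and $\zeta=b$ to $z=0^-$.

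For the domain equality I would argue by chaining equivalences. Since affine reparametrizations preserve Sobolev regularity, the requirement $\mathcal{H}_0 w \in H^1([0,b],\R^4)$ is equivalent to $\mathcal{Q}^- x^- \in H^1([a,0],\R^2)$ together with $\mathcal{Q}^+ x^+ \in H^1([0,b],\R^2)$, which is exactly the piecewise-$H^1$ content of $\mathcal{Q}_0 x \in D(\mathcal{J}_0)$ imposed through $\mathbf{d}_0$ and $\mathbf{d}_0^{\ast}$. The only further content of $\mathcal{Q}_0 x \in D(\mathcal{J}_0)$ is the continuity of the second component across the interface (so that $f_{I,\mathcal{Q}_0 x}$ is well defined), and this—together with the passivity relation $f_I = r e_I$ and the external condition $W_B[\,f_\partial;e_\partial\,]=0$—is precisely the single boundary condition $\tilde W_{B,c}[(\mathcal{H}_0 w)(b);(\mathcal{H}_0 w)(0)]=0$, equivalently $W_{B,c}[\,f_{\partial,\mathcal{H}_0 w};e_{\partial,\mathcal{H}_0 w}\,]=0$. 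Indeed, the block computation preceding Lemma~\ref{lem:WB-WBc} already shows that $U_1(\mathcal{Q}^- x^-)(0^-) + U_2(\mathcal{Q}^+ x^+)(0^+)=0$ encodes both that the second components agree at the interface and that $f_I = r e_I$, while the lower block reproduces $\tilde W_B$. Chaining these gives $w \in D(A_{\mathcal{H}_0}) \iff Sw \in D(A_{\mathcal{Q}_0})$.

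For the intertwining identity I would use that away from the interface $\mathcal{J}_0$ acts as $P_1\frac{d}{dz}$, so $A_{\mathcal{Q}_0}x$ equals $P_1\frac{d}{dz}(\mathcal{Q}^- x^-)$ on $(a,0)$ and $P_1\frac{d}{dz}(\mathcal{Q}^+ x^+)$ on $(0,b)$, while $A_{\mathcal{H}_0}$ acts blockwise as $P_1\frac{d}{d\zeta}$ on each of $(\mathcal{H}_0 w)_1$ and $(\mathcal{H}_0 w)_2$. On the second block $S$ is the identity, so the $(0,b)$-restriction matches immediately. On the first block the change of variables already displayed for $w_1(\zeta,t)=x^-(\eta\zeta+a,t)$ gives, by the chain rule, $P_1\frac{d}{d\zeta}\big((\mathcal{H}_0 w)_1\big)(\zeta) = \eta\,\big[P_1\frac{d}{dz}(\mathcal{Q}^- x^-)\big](\eta\zeta+a)$, which by \eqref{eq:T-map-2} is exactly the first block of $S^{-1}(A_{\mathcal{Q}_0}x)$; hence $S^{-1}A_{\mathcal{Q}_0}Sw = A_{\mathcal{H}_0}w$.

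The only genuinely delicate point is the bookkeeping in the domain step: one must track the orientation reversal of the first block (so that the interface value $(\mathcal{Q}^- x^-)(0^-)$ is read off at $\zeta=b$ and the outer value $(\mathcal{Q}^- x^-)(a)$ at $\zeta=0$), and one must recognize that the interface continuity of the second component—which on the $X$-side is a \emph{regularity} condition inside $D(\mathcal{J}_0)$—is relocated on the $\hat X$-side into the \emph{boundary functional} $W_{B,c}$, since the transformed interval $[0,b]$ carries no interior interface. Once this correspondence is pinned down, both assertions reduce to the identities established earlier in the section, and the isometry of $S$ from Lemma~\ref{lem:Isometric-isomorphism} ensures the similarity is implemented by a genuine isomorphism of the energy spaces.
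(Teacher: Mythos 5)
Your proposal is correct and follows exactly the route the paper intends: the paper states this lemma as a "simple observation" following from the preceding arguments, and your write-up supplies precisely those details — the domain equality via the block computation with $\tilde{W}_{B,c}$ (correctly noting that the interface continuity of $(\mathcal{Q}_0x)_2$ encoded in $D(\mathbf{d}_0)$ is relocated into the second row of $U_1,U_2$) and the intertwining identity via the chain rule for the affine reparametrization of the first block. No gaps.
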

	    	
With this in mind, we can show the following result.	    
\begin{theorem}
		\label{Theorem A Generates a Contraction Semigroup}
		Consider the port-Hamiltonian operator $A_{\mathcal{Q}_0} \colon D(A_{\mathcal
		Q_0}) \subset X \to X$ defined in \eqref{def:A_Q-v2} with a full rank matrix $W_B \in \mathbb{R}^{2 \times 4}$. Then the following assertions are equivalent:
		\begin{enumerate}
		    \item $A_{\mathcal{Q}_0}$ is the infinitesimal generator of a contraction semigroup on $X$ endowed with the scalar product 
		    $\langle \cdot, \cdot \rangle_{\mathcal{Q}_0} = \frac{1}{2} \langle \cdot , \mathcal{Q}_0 \cdot \rangle_{L^2}$.
		    \item $A_{\mathcal{Q}_0}$ is dissipative.
		    \item $r \geq 0$ and with $\Sigma_4$ defined in \eqref{Matrix Sigma}
		    we have
		    \begin{equation}
		        W_B \Sigma_4 W_B^{\top} \geq 0.
		        \label{Matrix Condition W_B}
		    \end{equation}
		\end{enumerate}
		\end{theorem}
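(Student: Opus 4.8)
The plan is to transport the whole question to the transformed, interface-free system governed by $A_{\mathcal{H}_0}$ on $\hat X$, where the classical boundary port-Hamiltonian theory of \cite{Jacob2012} applies verbatim, and then to translate the resulting matrix condition back to the original data via Lemma~\ref{lem:WB-WBc}. The three lemmas established above are exactly the three bridges needed: Lemma~\ref{lem:Isometric-isomorphism} makes $S$ an isometry, Lemma~\ref{lem:Asimilarity} makes $A_{\mathcal{Q}_0}$ and $A_{\mathcal{H}_0}$ similar through it, and Lemma~\ref{lem:WB-WBc} matches up the two matrix inequalities.

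First I would use Lemma~\ref{lem:Asimilarity} together with Lemma~\ref{lem:Isometric-isomorphism}. Since $S$ is an \emph{isometric} isomorphism and $A_{\mathcal{Q}_0} = S A_{\mathcal{H}_0} S^{-1}$ with $D(A_{\mathcal{Q}_0}) = S\, D(A_{\mathcal{H}_0})$, both relevant properties transfer. For generation: $(T(t))_{t\geq 0}$ is generated by $A_{\mathcal{H}_0}$ if and only if $(S T(t) S^{-1})_{t\geq 0}$ is generated by $A_{\mathcal{Q}_0}$, and isometry gives $\|S T(t) S^{-1}\|_{\mathcal{Q}_0} = \|T(t)\|_{\mathcal{H}_0}$, so one semigroup is contractive exactly when the other is. For dissipativity: for $x\in D(A_{\mathcal{Q}_0})$ one has $\langle A_{\mathcal{Q}_0} x, x\rangle_{\mathcal{Q}_0} = \langle A_{\mathcal{H}_0} S^{-1}x, S^{-1}x\rangle_{\mathcal{H}_0}$, so the dissipation inequality holds for $A_{\mathcal{Q}_0}$ iff it holds for $A_{\mathcal{H}_0}$. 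Hence statements (1) and (2) for $A_{\mathcal{Q}_0}$ are equivalent to the corresponding statements for $A_{\mathcal{H}_0}$.

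Next I would observe that $A_{\mathcal{H}_0}$ is precisely a first-order boundary port-Hamiltonian operator of the type treated in \cite{Jacob2012}, with system matrix $\hat P_1 = \dg(P_1,P_1)$, Hamiltonian density $\mathcal{H}_0$, and boundary-condition matrix $W_{B,c}$. Provided $W_{B,c}$ has full rank $4$, \cite[Theorem 7.2.4]{Jacob2012} gives the equivalence of: $A_{\mathcal{H}_0}$ generates a contraction semigroup; $A_{\mathcal{H}_0}$ is dissipative; and $W_{B,c}\Sigma_8 W_{B,c}^{\top} \geq 0$. To apply this theorem legitimately I must verify the full-rank hypothesis. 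Since $W_B$, and hence $\tilde W_B$, has full rank $2$ and $\hat R_{\extern}$ is invertible, it suffices to show $\tilde W_{B,c}$ has full row rank. Because its first two rows are supported on the column blocks of $U_1$ and $U_2$ while its last two rows are supported on the block of $\tilde W_B$, this reduces to checking that $[\,U_1 \mid U_2\,]$ has rank $2$, i.e.\ that the rows $(-r,1,r,0)$ and $(0,1,0,-1)$ are linearly independent; this holds for every $r\in\mathbb{R}$, including the degenerate case $r=0$ where $U_1$ itself is singular.

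Finally I would invoke Lemma~\ref{lem:WB-WBc} to replace $W_{B,c}\Sigma_8 W_{B,c}^{\top}\geq0$ by the equivalent pair $r\geq 0$ and $W_B\Sigma_4 W_B^{\top}\geq0$, which is statement (3); chaining the equivalences then closes the cycle $(1)\Leftrightarrow(2)\Leftrightarrow(3)$. I expect the main difficulty to be bookkeeping rather than conceptual: one must confirm that the boundary port variables $f_{\partial,\mathcal{H}_0 x}$ and $e_{\partial,\mathcal{H}_0 x}$ entering $D(A_{\mathcal{H}_0})$ are exactly those associated with $\hat P_1$ and $\hat R_{\extern}$, so that the hypotheses of \cite[Theorem 7.2.4]{Jacob2012} are met literally, and that the full-rank verification is handled with care in the borderline case $r=0$.
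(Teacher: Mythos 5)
Your proposal is correct and follows essentially the same route as the paper: transfer everything through the isometric similarity $S$ (Lemmas~\ref{lem:Isometric-isomorphism} and \ref{lem:Asimilarity}), apply \cite[Theorem 7.2.4]{Jacob2012} to the interface-free operator $A_{\mathcal{H}_0}$, and translate $W_{B,c}\Sigma_8W_{B,c}^{\top}\geq 0$ back via Lemma~\ref{lem:WB-WBc}. Your explicit check that $[\,U_1\mid U_2\,]$ has rank $2$ for every $r$ (including $r=0$) is a small but welcome elaboration of the paper's terser assertion that $W_{B,c}$ has full rank.
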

\begin{proof}
The assumptions imply that $\hat P_1$ is Hermitian and invertible. Furthermore, we have that $\mathcal{H}_0 \in L^\infty([0,b], \R^{4\times 4})$ is pointwise a.e. symmetric and positive definite, bounded and bounded away from $0$. Also, the matrix $W_{B,c}$ has full rank $4$, as $W_B$ has full rank. Thus, $A_{\mathcal{H}_0}$ meets the assumption of Theorem 7.2.4 in \cite{Jacob2012} and therefore for $A_{\mathcal{H}_0}$ we have the equivalence of the statements: (i) $A_{\mathcal{H}_0}$ generates a contraction semigroup $(T_{\mathcal{H}_0}(t))_{t \geq 0}$, (ii) $A_{\mathcal{H}_0}$ is dissipative, and (iii) $W_{B,c}\Sigma_8 W_{B,c}^{\top} \geq 0$. By Lemma~\ref{lem:Asimilarity}, the operators $A_{\mathcal{Q}_0}$ and $A_{\mathcal{H}_0}$ are similar through an isometric isomorphism. By this similarity, the items 1. and 2. of the assertion and (i), resp. (ii) are equivalent. Finally, by Lemma~\ref{lem:WB-WBc}, item 3. of the claim is equivalent to (iii). This shows the assertion.
\qed
\end{proof}
Condition \eqref{Matrix Condition W_B} is a standard tool for checking passivity of the boundary conditions \eqref{Boundary Condition wrt Boundary Flow and Effort}  (defined through $W_B$ and the boundary port variables $f_{\partial}, e_{\partial}$) of classical boundary port-Hamiltonian systems (see \cite[Section 4]{Gorrec2005}). In that case, energy is conserved if and only if there is no power inflow at the boundary (i.e., no interaction with the environment through the spatial boundary). In our present case, both the passivity criterion \eqref{Matrix Condition W_B} concerning the boundary matrix $W_B$ and the passivity condition $r \geq 0$ concerning the interface condition \eqref{Interface Passivity Relation} ensure that the right-hand side of \eqref{Power Balance Equation wrt Boundary and Interface Port} is bounded from above by zero, i.e., the energy does not increase over time. In fact, Theorem \ref{Theorem A Generates a Contraction Semigroup} characterizes the boundary and interface conditions for which the port-Hamiltonian model of two systems of conservation laws coupled by a fixed interface induces a contraction semigroup.
	The fact that for all $x_0 \in D(A_{\mathcal{Q}_0})$, the abstract Cauchy problem \eqref{Theorem A Generates a Contraction Semigroup - ACP} has a unique classical solution $x \colon [0, \infty) \to X$, depending continuously on the initial data, follows immediately from \cite[Corollary II.6.9]{Engel2000}.

 Recall that a semigroup $(T(t))_{t\ge0}$ over $(X, \|\cdot\|_{\mathcal{Q}_0})$ is called isometric if $\|T(t)x\|_{\mathcal{Q}_0} = \|x\|_{\mathcal{Q}_0}$ for all $t \ge 0$ and $x \in X$.

	  Similarly to Theorem \ref{Theorem A Generates a Contraction Semigroup}, we can characterize the boundary and interface conditions for which the internally stored energy of the system is conserved. Indeed, we see from the proof of Lemma~\ref{lem:WB-WBc} that $W_B\Sigma_4W_B^\top=0$ together with $r=0$ is equivalent to $W_{B,c}\Sigma_8W_{B,c}^\top =0$. We thus obtain the following result from an application of \cite[Theorem 4.1]{Gorrec2005}.  
	\begin{corollary}
		\label{Corollary A Generates an Isometric Semigroup}
		Consider the port-Hamiltonian operator $A_{\mathcal{Q}_0} \colon D(A_{\mathcal{Q}_0}) \subset X \to X$ defined in \eqref{Simplified PH System - Operator A} with $r \in \mathbb{R}$ and with a full rank matrix $W_B \in \mathbb{R}^{2 \times 4}$. Then $A_{\mathcal{Q}_0}$ is the infinitesimal generator of an isometric semigroup on the energy space $(X, \|\cdot\|_{\mathcal{Q}_0})$ if and only if $r =0$ and $W_B$ satisfies $W_B \Sigma_4 W_B^{\top} = 0$.
	\end{corollary}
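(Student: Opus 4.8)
The plan is to mirror the proof of Theorem~\ref{Theorem A Generates a Contraction Semigroup}, replacing the contraction characterization by the corresponding energy-conserving one. Concretely, I would transport the question to the interface-free operator $A_{\mathcal{H}_0}$ of \eqref{def:A_H} via the isometric isomorphism $S$, apply the characterization of isometric (energy-conserving) semigroups for boundary port-Hamiltonian systems to $A_{\mathcal{H}_0}$, and then convert the resulting matrix identity into conditions on $r$ and $W_B$ by reusing the block computation from the proof of Lemma~\ref{lem:WB-WBc}.

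First I would use Lemma~\ref{lem:Asimilarity} to write the semigroup generated by $A_{\mathcal{Q}_0}$ as $T_{\mathcal{Q}_0}(t) = S\,T_{\mathcal{H}_0}(t)\,S^{-1}$, where $(T_{\mathcal{H}_0}(t))_{t\ge0}$ is generated by $A_{\mathcal{H}_0}$. Because $S$ is an isometry of $(X,\|\cdot\|_{\mathcal{Q}_0})$ onto $(\hat X,\|\cdot\|_{\mathcal{H}_0})$ by Lemma~\ref{lem:Isometric-isomorphism}, for every $x\in X$ and $t\ge0$ one has $\|T_{\mathcal{Q}_0}(t)x\|_{\mathcal{Q}_0} = \|T_{\mathcal{H}_0}(t)S^{-1}x\|_{\mathcal{H}_0}$ and $\|x\|_{\mathcal{Q}_0} = \|S^{-1}x\|_{\mathcal{H}_0}$; since $S^{-1}$ maps $X$ bijectively onto $\hat X$, this shows that $(T_{\mathcal{Q}_0}(t))_{t\ge0}$ is isometric if and only if $(T_{\mathcal{H}_0}(t))_{t\ge0}$ is. I would then invoke \cite[Theorem 4.1]{Gorrec2005} for $A_{\mathcal{H}_0}$, whose hypotheses ($\hat P_1$ symmetric and invertible, $\mathcal{H}_0$ symmetric, bounded and bounded away from zero, and $W_{B,c}$ of full rank) were already verified in the proof of Theorem~\ref{Theorem A Generates a Contraction Semigroup}. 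In the energy-conserving case this theorem states that $A_{\mathcal{H}_0}$ generates an isometric semigroup precisely when $W_{B,c}\Sigma_8 W_{B,c}^{\top}=0$.

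It then remains to translate this identity. From the block computation in the proof of Lemma~\ref{lem:WB-WBc} we have
\[
W_{B,c}\Sigma_8 W_{B,c}^{\top} = \dg\!\left(\begin{bmatrix} 2r & 0 \\ 0 & 0\end{bmatrix},\,W_B\Sigma_4 W_B^{\top}\right),
\]
so this matrix vanishes exactly when $r=0$ and $W_B\Sigma_4 W_B^{\top}=0$; combined with the first step, this is the asserted equivalence. I do not expect a genuine obstacle here, since the analytic content is already packaged in the cited generation theorem and in Lemmas~\ref{lem:Isometric-isomorphism}--\ref{lem:WB-WBc}. The one point requiring care is that isometry is a two-sided, norm-preserving property, so I must make sure to invoke the energy-conserving form of the boundary characterization (the case $W_{B,c}\Sigma_8 W_{B,c}^{\top}=0$ with $W_{B,c}$ of full rank) rather than the one-sided contraction form used in Theorem~\ref{Theorem A Generates a Contraction Semigroup}, and that the property is transferred through $S$ using that $S$ is isometric and not merely boundedly invertible.
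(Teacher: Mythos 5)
Your proposal is correct and follows essentially the same route as the paper: the paper likewise obtains the corollary by observing (via the block computation in the proof of Lemma~\ref{lem:WB-WBc}) that $W_{B,c}\Sigma_8W_{B,c}^{\top}=0$ is equivalent to $r=0$ together with $W_B\Sigma_4W_B^{\top}=0$, and then applying \cite[Theorem 4.1]{Gorrec2005} to the similar interface-free operator. Your write-up merely makes explicit the transfer of the isometry property through $S$, which the paper leaves implicit.
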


	\section{Exponential Stability}
	\label{Section Exponential Stability}
	In this section, we present conditions that guarantee exponential stability of the abstract Cauchy problem \eqref{Theorem A Generates a Contraction Semigroup - ACP} associated with the port-Hamiltonian operator $A_{\mathcal{Q}_0}$ defined in \eqref{Simplified PH System - Operator A}. Recall that a $C_0$-semigroup $(T(t))_{t \geq 0}$ on $X$ is called (uniformly) exponentially stable if there exist constants $M \geq 1$ and $\alpha > 0$ such that
		$\|T(t)\|_{\mathcal{L}(X)} \leq M e^{- \alpha t}, t \geq 0$, see, e.g., \cite[Definition 3.11]{Engel2000}.
	Throughout this section, we impose the following assumptions:
	\begin{enumerate}[label = (A\arabic*)]
		\item \label{MatrixQ0Assumption1} The matrix operators $\mathcal{Q}^{\pm}$ defining the coercive operator $\mathcal{Q}_0 \in \mathcal{L}(X)$ satisfy $\mathcal{Q}^{\pm} \in \mathcal{C}^1([a,b], \mathbb{R}^{2 \times 2})$. 
		\item \label{ContractionAssumption} $r \geq 0$, $\rank(W_B) = 2$, and $W_B$ satisfies \eqref{Matrix Condition W_B}.
	\end{enumerate}

 In other words, Assumption \ref{ContractionAssumption} means precisely  that the port-Hamiltonian operator $A_{\mathcal{Q}_0}$ generates a contraction semigroup (see Theorem~\ref{Theorem A Generates a Contraction Semigroup}). Exponential stability of boundary port-Hamiltonian systems is already discussed in \cite[Chapter 9]{Jacob2012}. As we have shown in Lemma \ref{lem:Asimilarity} that our system is similar via an isometric isomorphism to a system of the type discussed in that reference, of course, the results there are immediately applicable. However, the respective conditions translate into estimates on the boundary values of $w$, that is in terms of the original system in conditions on the boundary variables as well as the interface variables. In the context of the systems discussed here, this may be undesirable and we will show that sufficient conditions can be formulated exclusively in terms of evaluations at the boundary points at $a$ and $b$. 
 
	We begin by showing an essential property that helps to verify the exponential stability of the port-Hamiltonian system \eqref{Theorem A Generates a Contraction Semigroup - ACP}. The following lemma is an extension of the energy estimates in \cite[Lemma~9.1.2]{Jacob2012}, which in our setting removes the necessity to evaluate at the interface. In the case $r = 0$ the difference is that, due to the jump discontinuity at the interface position, we need to introduce two auxiliary functions defined on two complementary subintervals to exploit the system dynamics. However, the case $r > 0$ yields an improved energy estimate that only requires the evaluation at one of the boundary points. The result is similar in appearance to  \cite[Lemma~9.1.2]{Jacob2012} for boundary port-Hamiltonian systems without an interface and the proof technique relies in a substantial manner on the tools developed there.

	\begin{lemma}
		\label{Lemma Auxiliary Lemma for Exponential Stability}
		Let the assumptions \ref{MatrixQ0Assumption1}-\ref{ContractionAssumption} hold. Denote by $(T(t))_{t \geq 0}$ the contraction semigroup generated by $A_{\mathcal{Q}_0}$.
		\begin{enumerate}[label = (\roman*)]
		    \item If $r = 0$, then there exist constants $\tau >0$ and $C >0$ such that for every initial value $x_0 \in D(A_{\mathcal{Q}_0})$, the trajectory $x(\cdot) = T(\cdot)x_0$ satisfies
		\begin{equation}
			\label{Lemma Auxiliary Lemma for Exponential Stablity - Inequality}
			\| x(\tau) \|_{\mathcal{Q}_0}^2 \leq C \int_{0}^{\tau} | \mathcal{Q}^-(a)x(a,t) |^2 + | \mathcal{Q}^+(b) x(b,t) |^2 \, dt.
		\end{equation}
		\item If $r > 0$, then there exist constants $\tau >0$ and $C >0$ such that for every initial value $x_0 \in D(A_{\mathcal{Q}_0})$, the trajectory $x(\cdot) = T(\cdot)x_0$ satisfies both
		\begin{equation}
			\label{Lemma Auxiliary Lemma for Exponential Stablity 2 - Inequality 2}
			\begin{split}
			\| x(\tau) \|_{\mathcal{Q}_0}^2 &\leq C \int_{0}^{\tau} | \mathcal{Q}^+(b) x(b,t) |^2 \, dt \quad \text{and} \\
				\| x(\tau) \|_{\mathcal{Q}_0}^2 &\leq C \int_{0}^{\tau} | \mathcal{Q}^-(a)x(a,t) |^2 \, dt.
			\end{split}
		\end{equation} 
		\end{enumerate}
	\end{lemma}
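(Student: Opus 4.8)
The plan is to transport the sidewise energy estimate of \cite[Lemma~9.1.2]{Jacob2012} to the two subintervals $[a,0]$ and $[0,b]$ and then to use the interface condition \eqref{Interface Passivity Relation} to discard the traces at the interface, retaining only the physical boundary traces at $a$ and $b$. Fix $x_0\in D(A_{\mathcal{Q}_0})$, write $x(\cdot)=T(\cdot)x_0$ for the classical solution, and abbreviate the co-energy variables $e^{\pm}:=\mathcal{Q}^{\pm}x^{\pm}$. Restricted to either open subinterval, $x$ solves the corresponding law in \eqref{Systems of Conservation Laws} classically, so the local energy density $\mathcal{E}^{\pm}:=\tfrac12(x^{\pm})^{\top}\mathcal{Q}^{\pm}x^{\pm}$ obeys $\partial_t\mathcal{E}^{\pm}=\tfrac12\,\partial_z\big((e^{\pm})^{\top}P_1e^{\pm}\big)$, the pointwise form of Lemma~\ref{Lemma Balance Equation}. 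The crucial structural fact I would exploit is that the local energy flux equals $(e^{\pm})^{\top}P_1e^{\pm}=-2e_1^{\pm}e_2^{\pm}$, i.e. it is the product of the two co-energy components.

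First I would set up, for each estimate, a weighted multiplier functional of the form $\int w(z)\,x^{\top}\mathcal{Q}_0x\,dz$ with an affine weight $w$. Differentiating in $t$, inserting the balance, and integrating by parts (separately on $[a,0]$ and $[0,b]$ to account for the possible jump at the interface) produces a flux term at each endpoint where $w$ does not vanish, an interior term $\int e^{\top}P_1e\,dz$, and correction terms stemming from $\tfrac{d}{dz}\mathcal{Q}^{\pm}$. Exactly as in \cite[Lemma~9.1.2]{Jacob2012}, assumption \ref{MatrixQ0Assumption1} ($\mathcal{Q}^{\pm}\in\mathcal{C}^1$) renders the correction terms lower-order, while the coercivity $mI_2\le\mathcal{Q}^{\pm}\le MI_2$ together with the contraction of $(T(t))_{t\ge0}$ is used to dominate the indefinite interior term over a horizon $\tau$ exceeding the transit time of the subintervals. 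Bounding the final energy $\|x(\tau)\|_{\mathcal{Q}_0}^2$ by the time-integrals of the squared endpoint co-energy traces in this way is the step I expect to be the most technical, and it is where the tools of \cite{Jacob2012} enter in an essential manner.

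The decisive and genuinely new step is the removal of the interface traces, and this is where the sign of $r$ is used. For part~(i), $r=0$, I would take two auxiliary functionals with weights vanishing at the interface, $\int_0^b z\,x^{\top}\mathcal{Q}^{+}x\,dz$ on $[0,b]$ and $\int_a^0(-z)\,x^{\top}\mathcal{Q}^{-}x\,dz$ on $[a,0]$. Here \eqref{Interface Passivity Relation} forces $f_{I,\mathcal{Q}_0x}=0$, hence $e_2^{-}(0^-)=e_2^{+}(0^+)=0$ by \eqref{eq:Interface-variables-at-0}; since the flux is $-2e_1e_2$, it vanishes at the interface from both sides and the two subsystems decouple into independent reflecting problems. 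The two estimates then involve only the physical endpoints $a$ and $b$, and adding them gives \eqref{Lemma Auxiliary Lemma for Exponential Stablity - Inequality}.

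For part~(ii), $r>0$, I would instead use a single functional with a one-sided weight, namely $\int_a^b(z-a)\,x^{\top}\mathcal{Q}_0x\,dz$ to observe at $b$ and $\int_a^b(b-z)\,x^{\top}\mathcal{Q}_0x\,dz$ to observe at $a$. The interface flux no longer vanishes, but the jump $(e^{-})^{\top}P_1e^{-}(0^-)-(e^{+})^{\top}P_1e^{+}(0^+)=-2f_{I}e_{I}=-2r\,e_I^2\le0$ has a favorable sign, so its contribution to the time-integrated identity ($2ar\int_0^\tau e_I^2\,dt$ for the first weight, $-2br\int_0^\tau e_I^2\,dt$ for the second, both $\le 0$ since $a<0<b$ and $r>0$) can be dropped to the correct side. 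Thus for $r>0$ the interface acts as an additional sink rather than a source, the weight eliminates the flux at the unobserved physical endpoint, and the estimate closes using a single boundary, yielding both inequalities in \eqref{Lemma Auxiliary Lemma for Exponential Stablity 2 - Inequality 2}. Finally I would convert $\mathcal{E}^{\pm}$ at the endpoints into the stated norms $|\mathcal{Q}^{\pm}(a)x(a,t)|^2$ and $|\mathcal{Q}^{\pm}(b)x(b,t)|^2$ by coercivity. The principal obstacle throughout is to control the indefinite interior flux term simultaneously with the interface term; the former follows the $\mathcal{C}^1$/contraction argument of \cite{Jacob2012}, whereas the latter must be matched against the sign of the interface dissipation, which is exactly what separates the two-sided estimate (i) from the sharper one-sided estimates (ii).
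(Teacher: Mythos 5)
Your high-level plan (localize the energy estimate of \cite[Lemma 9.1.2]{Jacob2012} on the two subintervals and then use the interface relation to pass from one side to the other) is the right one, but two steps as you describe them do not work. First, the functional $\int w(z)\,x^{\top}\mathcal{Q}_0x\,dz$ with an affine weight cannot produce the observability inequality: differentiating in $t$ and integrating by parts leaves the interior term $\tfrac12\int w'\,e^{\top}P_1e\,dz=-w'\int e_1e_2\,dz$, which is a \emph{flux}, not an energy; after integration over $[0,\tau]$ it is indefinite and of size $O(\tau\|x_0\|_{\mathcal{Q}_0}^2)$, so coercivity and contractivity cannot absorb it, and enlarging $\tau$ only makes it worse. (The multiplier whose interior term is energy-like is the cross term $\int w\,e_1e_2\,dz$, but its interface contribution is then a difference of \emph{full} traces and is no longer sign-definite, so the two halves of your argument demand incompatible functionals.) The paper avoids this entirely: it uses the sidewise energy $F(z)=\int_{\gamma(b-z)}^{\tau-\gamma(b-z)}x^{\top}(z,t)\mathcal{Q}_0(z)x(z,t)\,dt$ with $z$-dependent time windows, derives $\frac{d}{dz}F\ge-\kappa F$ on $[a,b]\setminus\{0\}$, and applies Gr\"onwall in the spatial variable on each subinterval.

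Second, and more fundamentally, your proof of (ii) uses the interface only through the sign of the dissipation $-f_Ie_I=-re_I^2\le0$. That inequality holds equally at $r=0$, yet the one-sided estimate \eqref{Lemma Auxiliary Lemma for Exponential Stablity 2 - Inequality 2} is false at $r=0$: if $f_I\equiv 0$ the two halves decouple and initial data supported in $[a,0)$ is invisible at $z=b$. An argument that merely drops a sign-definite interface term therefore proves too much and must contain a gap. The missing quantitative ingredient --- the actual content of the paper's proof of (ii) --- is that for $r>0$ the interface relation is an \emph{invertible} map between the full traces, $(\mathcal{Q}^-x)_2(0^-)=(\mathcal{Q}^+x)_2(0^+)=f_I$ and $(\mathcal{Q}^-x)_1(0^-)=(\mathcal{Q}^+x)_1(0^+)+f_I/r$, which yields $F(0^-)\le k_1\tfrac{M}{m}F(0^+)$ with a constant $k_1>(r^2+1)/r^2$ that necessarily blows up as $r\to0$; this is what allows the Gr\"onwall chain started at $z=b$ to be continued across the interface down to $z=a$. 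Your treatment of part (i) (decoupling via $f_I=0$ and adding the two subinterval estimates) is consistent in spirit with the paper, which for that part refers to \cite[pp. 120--122]{Kilian2022}.
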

	
	\begin{proof}
	The proof of $(i)$ can be found in \cite[pp. 120-122]{Kilian2022} and we will concentrate on the second assertion. We will prove the first inequality in \eqref{Lemma Auxiliary Lemma for Exponential Stablity 2 - Inequality 2}; the proof of the second being similar.

 Since  \eqref{Lemma Auxiliary Lemma for Exponential Stablity 2 - Inequality 2} is clear for $x_0 = 0$, assume that 
$0 \neq x_0 \in D(A_{\mathcal{Q}_0})$ and let $x(\cdot,\cdot)$ be the corresponding solution. Choose $\gamma > 0$, $\tau > 0$ such that $\tau > 2 \gamma (b-a)$ and define the function $F \colon [a,b]\setminus \{ 0 \} \to [0, \infty)$ by
\begin{equation}
\label{Lemma Auxiliary Lemma for Exponential Stability - Function F}
    F(z) := \int_{\gamma(b-z)}^{\tau - \gamma(b-z)} x^{\top}(z,t) \mathcal{Q}_0(z) x(z,t) \, dt, \quad z \in [a,b]
    \setminus \{ 0 \}. 
\end{equation}
In general, $F$ does not have a continuous extension to  $[a,b]$, as $F(0^-) \neq F(0^+)$. 

Recall that on subdomains not containing the interface position $l = 0$, the operator $\mathcal{J}_0$ defined in \eqref{Operator Jl} acts as the differential operator $P_1 \frac{d}{dz}$, where $P_1$ is given in \eqref{Matrix P1}. Furthermore, by \ref{MatrixQ0Assumption1}, $F$ is differentiable on the subdomains $(a,0)$ and $(0,b)$. Following the proof of \cite[Lemma 9.1.2]{Jacob2012}, it can be shown that if $\gamma$ and $\tau$ are sufficiently large, then there is a $\kappa > 0$ such that
\begin{equation*}
    \frac{d}{dz} F(z) \geq - \kappa F(z), \quad z \in [a,b] \setminus \{ 0 \}. 
\end{equation*}
Applying Gr{\"o}nwall's Lemma (see, e.g., \cite[Section 2.4]{Teschl2012}), the following holds on the respective subdomain:
\begin{align}
\label{Lemma Auxiliary Lemma for Exponential Stability - Function F Estimates}
\begin{split}
    F(z) &\leq F(b) e^{\kappa (b-z)} ,\quad z \in (0, b], \\
    F(z) &\leq F(0^-) e^{-\kappa z}, \quad z \in [a,0).
    \end{split}
\end{align}
We want to show that there is some $\tilde{C} > 0$ such that for all $z \in [a,b] \setminus \{ 0 \}$ we have
\begin{align}
\label{Lemma Auxiliary Lemma for Exponential Stability - Function F Desired Estimate}
    F(z) \leq \tilde{C} F(b) e^{\kappa (b-a)}.
\end{align}
Using the notation (cf. \eqref{eq:Interface-variables-at-0}) $f_I:=f_{I, \mathcal{Q}_0 x} = \left( \mathcal{Q}^- x \right)_2 (0^-)$, continuity of $\mathcal{Q}^-$, as well as the fact that 
$\mathcal{Q}^-(0)$ is symmetric, we obtain that 
	\begin{align*}
		F(0^-) 
		&= \int_{\gamma b}^{\tau - \gamma b} x^{\top}(0^-,t) \mathcal{Q}^-(0) x(0^-,t) \, dt \\
		&= \int_{\gamma b}^{\tau - \gamma b} \begin{bmatrix}
			\left( \mathcal{Q}^-(0) x(0^-,t) \right)_1 \\
			f_I
		\end{bmatrix}^{\top} \left( \mathcal{Q}^-(0) \right)^{-1}  \begin{bmatrix}
		\left( \mathcal{Q}^-(0) x(0^-,t) \right)_1 \\
		f_I
	\end{bmatrix} \, dt.
	\end{align*}
As $m I \leq \mathcal{Q}^{\pm}(z) \leq MI$ for all $z \in [a,b]$, it follows for all $t \geq 0$ that
\begin{align*}
	\frac{1}{M} \int_{\gamma b}^{\tau - \gamma b} \left| \begin{bmatrix}
		\left( \mathcal{Q}^-(0) x(0^-,t) \right)_1 \\
		f_I
	\end{bmatrix} \right|^2 \, dt \leq F(0^-) \leq \frac{1}{m} \int_{\gamma b}^{\tau - \gamma b} \left| \begin{bmatrix}
	\left( \mathcal{Q}^-(0) x(0^-,t) \right)_1 \\
	f_I
\end{bmatrix} \right|^2 \, dt.
\end{align*}
Analogously, using $f_I=\left( \mathcal{Q}^+ x \right)_2 (0^+)$, see \eqref{eq:Interface-variables-at-0},
\begin{align*}
	\frac{1}{M} \int_{\gamma b}^{\tau - \gamma b} \left| \begin{bmatrix}
		\left( \mathcal{Q}^+(0) x(0^+,t) \right)_1 \\
		f_I
	\end{bmatrix} \right|^2 \, dt \leq F(0^+) \leq \frac{1}{m} \int_{\gamma b}^{\tau - \gamma b} \left| \begin{bmatrix}
		\left( \mathcal{Q}^+(0) x(0^+,t) \right)_1 \\
		f_I
	\end{bmatrix} \right|^2 \, dt.
\end{align*}
For $t \geq 0$, define $\alpha (t) := \begin{bmatrix}
    \left( \mathcal{Q}^-(0) x(0^-,t) \right)_1  & \left( \mathcal{Q}^+(0) x(0^+,t) \right)_1
\end{bmatrix}^\top$.  Then the interface condition \eqref{Interface Passivity Relation} can be rewritten as
	\begin{equation*}
		f_I = r e_I =  r \Big(\left( \mathcal{Q}^-(0) x(0^-,t) \right)_1 - \left( \mathcal{Q}^+(0) x(0^+,t) \right)_1 \Big)
		=  r \left( \alpha_1(t) - \alpha_2(t) \right).  
	\end{equation*}
Now consider the quadratic norms $|\cdot|_{V_1}$ and $|\cdot|_{V_2}$ defined via the quadratic forms given by the symmetric, positive definite matrices
\begin{equation*}
    V_1 := \begin{bmatrix}
     1+r^2 & -r^2\\ - r^2 & r^2
 \end{bmatrix}\,, \quad     V_2:= \begin{bmatrix}
     r^2 & -r^2\\ -r^2 & 1+r^2
 \end{bmatrix},
\end{equation*}
and let $k_1\geq 1$ be such that always $|\alpha|^2_{V_1}\leq k_1 |\alpha|^2_{V_2}$.
With this we have 
\begin{multline*}
  F(0^-)  \leq \frac{1}{m} \int_{\gamma b}^{\tau - \gamma b} \alpha_1(t)^2 + r^2 (\alpha_1(t) - \alpha_2(t))^2 \, dt = \frac{1}{m} \int_{\gamma b}^{\tau - \gamma b} \left| 
       \alpha(t) 
   \right|^2_{V_1} \, dt   \\
   \leq k_1 \frac{M}{m} \frac{1}{M} \int_{\gamma b}^{\tau - \gamma b} \left| 
       \alpha(t)  \right|^2_{V_2} \, dt 
       \leq k_1 \frac{M}{m} F(0^+).
\end{multline*}

By virtue of \eqref{Lemma Auxiliary Lemma for Exponential Stability - Function F Estimates}, simple calculations yield that for all $z \in [a,0)$ it holds
\begin{align*}
    F(z) \leq F(0^-) e^{-\kappa z} \leq k_1 \frac{M}{m} F(0^+) e^{- \kappa a} \leq k_1 \frac{M}{m} F(b) e^{\kappa (b-a)},
\end{align*}
and for all $z \in (0,b]$, we have
\begin{align*}
    F(z) \leq F(b) e^{\kappa (b-z)} \leq k_1 \frac{M}{m} F(b) e^{\kappa (b-a)}.
\end{align*}
Choosing $\tilde{C} = k_1 \frac{M}{m} $, we conclude that \eqref{Lemma Auxiliary Lemma for Exponential Stability - Function F Desired Estimate} holds for all $z \in [a,b] \setminus \{ 0 \}$. 

Now, we are able to verify the first estimate in \eqref{Lemma Auxiliary Lemma for Exponential Stablity 2 - Inequality 2}. Following the proof of \cite[Lemma 9.1.2]{Jacob2012} and using the estimate \eqref{Lemma Auxiliary Lemma for Exponential Stability - Function F Desired Estimate}, it may be shown that
		\begin{align*}
			2(\tau - 2 \gamma (b-a))  \|x(\tau)\|_{\mathcal{Q}_0}^2 \leq \frac{(b-a) \tilde{C}}{m} e^{\kappa (b-a)}  \int_{0}^{\tau} | \mathcal{Q}^+(b) x(b,t) |^2 \, dt.
			\end{align*}
		Letting $C = \frac{(b-a) \tilde{C} e^{\kappa (b-a)} }{2m (\tau - 2 \gamma (b-a))}$, we conclude that
		the first inequality in \eqref{Lemma Auxiliary Lemma for Exponential Stablity 2 - Inequality 2} holds. The second estimate in \eqref{Lemma Auxiliary Lemma for Exponential Stablity 2 - Inequality 2} can be shown in a similar fashion by defining
\begin{align*}
	\tilde{F}(z) := \int_{\gamma (z - a)}^{\tau - \gamma (z - a)} x^{\top}(z,t) \mathcal{Q}_0(z) x(z,t) \, dt, \quad z \in [a,b] \setminus \{ 0\}.
\end{align*}
\qed
\end{proof}
\begin{remark}
    It can be shown that the uniform bound $k_1$ in the above proof has to be chosen such that $k_1 > \frac{r^2 + 1}{r^2}$.
\end{remark}
	Lemma \ref{Lemma Auxiliary Lemma for Exponential Stability} shows that there exists some time $\tau >0 $ such that the energy of a state can be bounded by the sum of the accumulated energies at both boundaries during the time interval $[0, \tau]$, and in the case $r > 0$, it can be bounded by the accumulated energy at either boundary. This result allows us to prove the following sufficient condition for exponential stability of the semigroup generated by $A_{\mathcal{Q}_0}$.

	\begin{theorem}
		\label{Theorem Exponential Stability}
		Let the assumptions \ref{MatrixQ0Assumption1}-\ref{ContractionAssumption} hold. Assume there exists some $k > 0$ such that 
		\begin{equation}
			\label{Theorem Exponential Stability - Inequality}
			\langle A_{\mathcal{Q}_0} x_0, x_0 \rangle_{\mathcal{Q}_0} \leq  - k \left( | (\mathcal{Q}_0 x_0)(a) |^2 + | (\mathcal{Q}_0 x_0)(b) |^2 \right)
		\end{equation}
		holds for all $x_0 \in D(A_{\mathcal{Q}_0})$. Then $A_{\mathcal{Q}_0}$ generates an exponentially stable $C_0$-semigroup. In addition, if $r > 0$, then exponential stability of the $C_0$-semigroup generated by $A_{\mathcal{Q}_0}$ follows if
  one of the following estimates holds for all $x_0 \in D(A_{\mathcal{Q}_0})$:
		 \begin{align*}
        \langle A_{\mathcal{Q}_0} x_0, x_0 \rangle_{\mathcal{Q}_0} &\leq - k | (\mathcal{Q}_0 x_0)(b) |^2, \\
        \langle A_{\mathcal{Q}_0} x_0, x_0 \rangle_{\mathcal{Q}_0} &\leq - k | (\mathcal{Q}_0 x_0)(a) |^2.
    \end{align*}
	\end{theorem}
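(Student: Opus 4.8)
The plan is to combine the energy-dissipation bound from Lemma~\ref{Lemma Auxiliary Lemma for Exponential Stability} with the power balance equation \eqref{Power Balance Equation wrt Boundary and Interface Port} to obtain a Lyapunov-type estimate on the contraction semigroup, and then invoke a standard characterization of exponential stability for contraction semigroups. The key fact I would use is that a contraction semigroup $(T(t))_{t\ge0}$ is exponentially stable if and only if there exists a time $\tau>0$ and a constant $c\in(0,1)$ such that $\|T(\tau)x_0\|_{\mathcal{Q}_0}^2 \le c\,\|x_0\|_{\mathcal{Q}_0}^2$ for all $x_0$; equivalently, it suffices to show that the energy decays by a definite fraction over one time step. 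Since $(X,\|\cdot\|_{\mathcal{Q}_0})$ is the energy space, I would work throughout with $H(x)=\|x\|_{\mathcal{Q}_0}^2$.

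\textbf{First I would establish the dissipation identity.} For a classical solution $x(\cdot)=T(\cdot)x_0$ with $x_0\in D(A_{\mathcal{Q}_0})$, Lemma~\ref{Lemma Balance Equation} gives $\frac{d}{dt}\|x(t)\|_{\mathcal{Q}_0}^2 = \langle e_\partial(t),f_\partial(t)\rangle - e_I(t)f_I(t) = 2\langle A_{\mathcal{Q}_0}x(t),x(t)\rangle_{\mathcal{Q}_0}$. Integrating from $0$ to $\tau$ yields
\begin{equation*}
    \|x(\tau)\|_{\mathcal{Q}_0}^2 - \|x_0\|_{\mathcal{Q}_0}^2 = 2\int_0^\tau \langle A_{\mathcal{Q}_0}x(t),x(t)\rangle_{\mathcal{Q}_0}\,dt.
\end{equation*}
Applying the hypothesis \eqref{Theorem Exponential Stability - Inequality} pointwise in $t$ to the state $x(t)\in D(A_{\mathcal{Q}_0})$ (which remains in the domain for all $t$), I obtain
\begin{equation*}
    \|x(\tau)\|_{\mathcal{Q}_0}^2 \le \|x_0\|_{\mathcal{Q}_0}^2 - 2k\int_0^\tau \big(|(\mathcal{Q}_0 x)(a,t)|^2 + |(\mathcal{Q}_0 x)(b,t)|^2\big)\,dt.
\end{equation*}

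\textbf{Next I would close the loop using Lemma~\ref{Lemma Auxiliary Lemma for Exponential Stability}.} Since $(\mathcal{Q}_0 x)(a,t)=\mathcal{Q}^-(a)x(a,t)$ and $(\mathcal{Q}_0 x)(b,t)=\mathcal{Q}^+(b)x(b,t)$, the integral on the right is exactly the quantity that bounds $\|x(\tau)\|_{\mathcal{Q}_0}^2$ from below in \eqref{Lemma Auxiliary Lemma for Exponential Stablity - Inequality}. For the appropriate $\tau$ and $C$ furnished by that lemma (case $r=0$), the accumulated boundary energy satisfies $\int_0^\tau(|\mathcal{Q}^-(a)x(a,t)|^2+|\mathcal{Q}^+(b)x(b,t)|^2)\,dt \ge \frac{1}{C}\|x(\tau)\|_{\mathcal{Q}_0}^2$. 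Substituting gives $\|x(\tau)\|_{\mathcal{Q}_0}^2 \le \|x_0\|_{\mathcal{Q}_0}^2 - \frac{2k}{C}\|x(\tau)\|_{\mathcal{Q}_0}^2$, hence $\|x(\tau)\|_{\mathcal{Q}_0}^2 \le \frac{1}{1+2k/C}\,\|x_0\|_{\mathcal{Q}_0}^2$, which is a strict contraction factor $c<1$ over the fixed time $\tau$. A standard semigroup argument then upgrades this single-step decay to $\|T(n\tau)x_0\|_{\mathcal{Q}_0}^2 \le c^n\|x_0\|_{\mathcal{Q}_0}^2$, and contractivity on the remaining sub-interval interpolates to exponential decay $\|T(t)\|_{\mathcal{L}(X)}\le M e^{-\alpha t}$; by density of $D(A_{\mathcal{Q}_0})$ the estimate extends to all of $X$. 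For the case $r>0$, I would repeat the argument using either single-boundary bound in \eqref{Lemma Auxiliary Lemma for Exponential Stablity 2 - Inequality 2} against the corresponding one-sided hypothesis, which works because either boundary term alone now controls $\|x(\tau)\|_{\mathcal{Q}_0}^2$.

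\textbf{The main obstacle} I anticipate is purely a matter of regularity/limiting bookkeeping rather than a genuine difficulty: the energy balance and the hypothesis are stated for $x_0\in D(A_{\mathcal{Q}_0})$, where trace evaluations at $a,b$ and the time-integral of $\langle A_{\mathcal{Q}_0}x(t),x(t)\rangle_{\mathcal{Q}_0}$ are well-defined, so I must ensure that the contraction factor $c$ obtained on the dense domain transfers to the operator norm on all of $X$ via density and boundedness of $T(\tau)$. Everything else reduces to combining the two displayed inequalities and the elementary observation that $\frac{1}{1+2k/C}<1$ for $k,C>0$.
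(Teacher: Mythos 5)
Your proof is correct and follows essentially the same route as the paper: the paper's proof simply combines Lemma~\ref{Lemma Auxiliary Lemma for Exponential Stability} with Theorem~9.1.3 of Jacob--Zwart, and the argument you spell out (integrate the dissipation inequality, bound the accumulated boundary energy from below by $\|x(\tau)\|_{\mathcal{Q}_0}^2/C$ via the lemma, extract a contraction factor $<1$ over time $\tau$, and iterate) is precisely the content of that cited theorem.
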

	
	\begin{proof}
		This follows immediately from the energy estimates in Lemma \ref{Lemma Auxiliary Lemma for Exponential Stability} and from Theorem 9.1.3 in \cite{Jacob2012}.
        \qed
	\end{proof}
	Compared with \cite[Theorem 5.1]{Augner2020} the second statement of the previous theorem is a slight improvement, as in the case $r>0$ only a condition on one of the boundary points is required. Lastly, we want to present another sufficient condition for the exponential stability of the port-Hamiltonian system \eqref{Simplified PH-System}. To prove the following theorem, one follows the exposition of Lemma 9.1.4 in \cite{Jacob2012} and exploits the passivity relation at the interface, as elaborated in \cite[Theorem 5.19]{Kilian2022}. The proof relies on the fact that the estimate (9.25) in \cite{Jacob2012} has been shown with respect to the evaluation at both spatial boundaries.
	\begin{theorem}
		\label{Theorem 2 Exponential Stability}
		Let the assumptions \ref{MatrixQ0Assumption1}-\ref{ContractionAssumption} hold. If the matrix $W_B$ satisfies $W_B \Sigma_4 W_B^{\top} > 0$, then $A_{\mathcal{Q}_0}$ generates an exponentially stable $C_0$-semigroup. 
	\end{theorem}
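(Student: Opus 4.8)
The plan is to reduce the claim to the hypothesis of Theorem~\ref{Theorem Exponential Stability}: I would show that the strict condition $W_B\Sigma_4 W_B^{\top} > 0$ forces the dissipation estimate \eqref{Theorem Exponential Stability - Inequality}, after which exponential stability follows at once from Theorem~\ref{Theorem Exponential Stability} (and hence ultimately from the energy estimate of Lemma~\ref{Lemma Auxiliary Lemma for Exponential Stability}). This keeps the argument intrinsic to $A_{\mathcal{Q}_0}$ and avoids transporting everything to the transformed operator $A_{\mathcal{H}_0}$.

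First I would compute the quadratic form of the generator. For $x_0 \in D(A_{\mathcal{Q}_0})$ we have $\mathcal{Q}_0 x_0 \in D(\mathcal{J}_0)$, so setting $x = y = x_0$ in the power pairing \eqref{Skew-Symmetry of J0 - Representation 2} gives $\langle A_{\mathcal{Q}_0} x_0, x_0\rangle_{\mathcal{Q}_0} = \tfrac12\big(\langle e_{\partial}, f_{\partial}\rangle - f_I e_I\big)$, where $(f_{\partial}, e_{\partial})$ and $(f_I, e_I)$ are the boundary and interface port variables of $\mathcal{Q}_0 x_0$. By assumption~\ref{ContractionAssumption} we have $r\geq0$, and the interface condition \eqref{Interface Passivity Relation} yields $f_I e_I = r e_I^2 \geq 0$; hence the interface term can only help, and $\langle A_{\mathcal{Q}_0} x_0, x_0\rangle_{\mathcal{Q}_0} \leq \tfrac12\langle e_{\partial}, f_{\partial}\rangle = \tfrac14 \binom{f_{\partial}}{e_{\partial}}^{\top}\Sigma_4\binom{f_{\partial}}{e_{\partial}}$, using the identity $\langle e_{\partial}, f_{\partial}\rangle = \tfrac12\binom{f_{\partial}}{e_{\partial}}^{\top}\Sigma_4\binom{f_{\partial}}{e_{\partial}}$.

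The crux, which I expect to be the main obstacle, is a purely linear-algebraic fact: if $W_B\in\R^{2\times4}$ has full rank and $W_B\Sigma_4 W_B^{\top} > 0$, then $\Sigma_4$ is negative definite on $\ker W_B$. To see this, note that $\Sigma_4$ is a reflection, $\Sigma_4^2 = I_4$, of signature $(2,2)$. In an orthonormal basis adapted to $\R^4 = (\ker W_B)^{\perp}\oplus\ker W_B$, the Gram matrix of $\Sigma_4$ becomes $G = \begin{bmatrix} A & B \\ B^{\top} & C\end{bmatrix}$ with $G^2 = I_4$, where $A$ is congruent to $W_B\Sigma_4 W_B^{\top}$, hence $A > 0$, and $C$ represents $\Sigma_4|_{\ker W_B}$. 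From $G^2 = I_4$ one reads off $A^2 = I - BB^{\top}$ and $C^2 = I - B^{\top}B$; since $BB^{\top}$ and $B^{\top}B$ share their spectrum, the eigenvalues of $A$ coincide with the moduli of the eigenvalues of $C$, and combined with $\operatorname{tr}A + \operatorname{tr}C = 0$ (signature $(2,2)$) together with $A>0$ this forces both eigenvalues of $C$ to be strictly negative. As $\ker W_B$ is finite-dimensional, there is a constant $c>0$ with $v^{\top}\Sigma_4 v \leq -c|v|^2$ for all $v\in\ker W_B$. Since the boundary condition $W_B\binom{f_{\partial}}{e_{\partial}} = 0$ places $\binom{f_{\partial}}{e_{\partial}}$ in $\ker W_B$, we obtain $\langle e_{\partial}, f_{\partial}\rangle \leq -\tfrac{c}{2}\big|\binom{f_{\partial}}{e_{\partial}}\big|^2$.

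Finally I would translate this back to the boundary traces. Since $R_{\extern}$ in \eqref{Matrix Rext} is invertible and $\binom{f_{\partial}}{e_{\partial}} = R_{\extern}\,\trace_0(\mathcal{Q}_0 x_0)$, we have $\big|\binom{f_{\partial}}{e_{\partial}}\big|^2 \geq \|R_{\extern}^{-1}\|^{-2}\big(|(\mathcal{Q}_0 x_0)(a)|^2 + |(\mathcal{Q}_0 x_0)(b)|^2\big)$. Chaining the three estimates produces \eqref{Theorem Exponential Stability - Inequality} with $k = c/(4\|R_{\extern}^{-1}\|^2) > 0$, and Theorem~\ref{Theorem Exponential Stability} then yields exponential stability. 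An alternative route, closer to \cite[Lemma~9.1.4]{Jacob2012}, is to transport the problem to $A_{\mathcal{H}_0}$ via the isometric isomorphism $S$ of Lemma~\ref{lem:Asimilarity} and invoke the exponential-stability theory for boundary port-Hamiltonian systems directly; the boundary estimate derived above is precisely what makes the interface contribution drop out, so that only the evaluations at $a$ and $b$ survive.
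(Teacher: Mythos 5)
Your proposal is correct and follows essentially the same route as the paper, whose own proof is only a two-sentence sketch deferring to Lemma~9.1.4 and Theorem~9.1.3 of \cite{Jacob2012}: compute $\langle A_{\mathcal{Q}_0}x_0,x_0\rangle_{\mathcal{Q}_0}=\tfrac12(\langle e_\partial,f_\partial\rangle-f_Ie_I)$ from \eqref{Skew-Symmetry of J0 - Representation 2}, discard the interface term using $f_Ie_I=re_I^2\geq 0$, establish the boundary dissipation estimate \eqref{Theorem Exponential Stability - Inequality} from $W_B\Sigma_4W_B^{\top}>0$, and conclude via Theorem~\ref{Theorem Exponential Stability}. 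The one point where you genuinely depart from the cited argument is the core linear-algebra step: the reference proves that $\langle e_\partial,f_\partial\rangle$ is negative definite on $\ker W_B$ through the normal form $W_B=S\,[\,I+V,\ I-V\,]$ with $VV^{\top}<I$, whereas you work directly with the involution structure, using $\Sigma_4^2=I_4$ and $\trace \Sigma_4=0$ in an orthonormal basis adapted to the splitting of $\R^4$ into the range of $W_B^{\top}$ and its orthogonal complement. That argument is sound: the identities $A^2=I-BB^{\top}$ and $C^2=I-B^{\top}B$ together with the coincidence of the spectra of $BB^{\top}$ and $B^{\top}B$ pin down the moduli of the eigenvalues of $C$, and the trace constraint then rules out any nonnegative eigenvalue, so $\Sigma_4$ is indeed negative definite on $\ker W_B$. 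Your version buys a self-contained proof that never leaves the original operator $A_{\mathcal{Q}_0}$; the paper's version buys brevity by reusing the standard port-Hamiltonian parametrization. The resulting constant $k=c/(4\|R_{\extern}^{-1}\|^2)$ and the application of Theorem~\ref{Theorem Exponential Stability} are exactly as intended.
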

	
	In this section, we have shown that if we impose certain boundary conditions, then we can ensure that the contraction semigroup $(T(t))_{t \geq 0}$ generated by the port-Hamiltonian operator $A_{\mathcal{Q}_0}$ is of type $C_0(1, -\alpha)$ for some $\alpha > 0$, i.e.,
	\begin{equation*}
		\|T(t)\|_{\mathcal{L}(X, \|\cdot\|_{\mathcal{Q}_0})} \leq e^{-\alpha t}, \quad t \geq 0. 
	\end{equation*}
	As opposed to the boundary conditions, the interface condition plays a minor role in the proof of Theorem \ref{Theorem 2 Exponential Stability}. It is only required that $r \geq 0$, that is, there is no power inflow at the interface position.

\section{Example: Coupled Acoustic Waveguides}
\label{sec:Example}
	
In this section, we show how our results can be applied to specific situations.
We consider the example of two acoustic waveguides coupled by an interface consisting of some membrane that acts as a local acoustic impedance as it appears, for instance, in \cite{Ogam_ApplAcoustics_21}. 

	Let $a<0<b$.
	Consider two acoustic waveguides on the spatial intervals $[a,0)$ and $(0,b]$, respectively. On each subdomain, the acoustic wave model is given by the coupled system of linear balance equations consisting of the mass balance equation and the momentum balance equation. One may choose as energy variables the condensation $s$, which is the relative change of mass density of the air, and the momentum $\pi$. Then the system of balance equations may be expressed on each subdomain by
		\begin{align*}
			\frac{\partial}{\partial t} \begin{bmatrix}
				s^{\pm}(z,t) \\
				\pi^{\pm}(z,t)
			\end{bmatrix} &= \begin{bmatrix}
				0 & -\frac{\partial}{\partial z} \\
				- \frac{\partial}{\partial z} & 0 
			\end{bmatrix} \begin{bmatrix}
				 p^{\pm}(z,t) \\
				v^{\pm}(z,t)
			\end{bmatrix},
		\end{align*}
		where $s^{\pm}(z,t) \in \mathbb{R}$ is the condensation and $p^{\pm}(z,t) \in \mathbb{R}$ is the relative pressure at time $t \geq 0$, and $\pi^{\pm}(z,t) \in \mathbb{R}$ denotes the momentum and $v^{\pm}(z,t) \in \mathbb{R}$ denotes the velocity at the positions $z \in [a,0)$ and $ z \in (0,b]$, respectively. The variables $e^{\pm} := (p^{\pm}, v^{\pm} )$ are related to the state variables $x^{\pm} := ( s^{\pm}, \pi^{\pm})$ by the linear map defined by the coercive multiplication operators $\mathcal{Q}^{\pm}$ as follows:
		\begin{equation*}
			e^{\pm}(\cdot) =(\mathcal{Q}^{\pm}x^{\pm})(\cdot) = \begin{bmatrix}
				B^{\pm}(\cdot) & 0 \\
				0 & \frac{1}{\rho_0^{\pm}(\cdot)}  \end{bmatrix} \begin{bmatrix}
				 s^{\pm}(\cdot) \\
				\pi^{\pm}(\cdot)
			\end{bmatrix}, \quad x ^{\pm} \in X^{\pm},
		\end{equation*}
  where $\rho_0^{\pm}(z) >0$ and $B^{\pm}(z) >0$ are the mean mass density and the adiabatic bulk modulus at $z$, respectively.  We assume that $\frac{1}{\rho_0^-}, B^- \in L^{\infty}((a,0), \mathbb{R})$, and $\frac{1}{\rho_0^+}, B^+ \in L^{\infty}((0,b), \mathbb{R})$.

Let $X^- := L^2((a,0), \mathbb{R}^2)$ and $X^+ := L^2((0,b), \mathbb{R}^2)$.  
The mechanical energy of the corresponding system is given by the following Hamiltonian functionals:
	\begin{align*}
		H^-(x^-) :=& \int_{a}^{0} \mathcal{H}^-(x^-(z)) \, dz = \frac{1}{2} \langle x^-, \mathcal{Q}^- x^- \rangle_{L^2(a,0)}, \quad x^- \in X^-, \\
			H^+(x^+) :=& \int_{0}^{b} \mathcal{H}^+(x^+(z)) \, dz = \frac{1}{2} \langle x^+, \mathcal{Q}^+ x^+ \rangle_{L^2(0,b)}, \quad x^+ \in X^+.
		\end{align*}
		
The two acoustic waveguides are coupled at $z = 0$ by an acoustic impedance, which we assume to be of resistive type with resistance $R_I >0$ . Furthermore, we set the (relative) pressure at $z = a$ to zero, and we put another resistive impedance with resistance $R_b > 0$ at $z = b$. 
		
 We want to show by means of our results that the coupled system is well-posed and exponentially stable. 

\textbf{Well-posedness.} 		
		The model of the coupling is precisely given by the system \eqref{Simplified PH-System} (with $l = 0$) defined on the state space $X := L^2([a,b], \mathbb{R}^2)$, where $[a,b]$ is the composed spatial domain. Defining the coercive matrix operator $\mathcal{Q}_0 := c_0^- \mathcal{Q}^- + c_0^+ \mathcal{Q}^+ \in \mathcal{L}(X)$, we may endow $X$ with the inner product $\langle \cdot , \cdot \rangle_{\mathcal{Q}_0}$ defined in \eqref{Inner Product wrt Q0}.
		
		Due to the resistive acoustic impedance placed at the interface, we expect a pressure drop, and therefore have a discontinuity of the pressure $p := c_0^- p^- + c_0^+ p^+$, whereas the velocity is continuous at the interface $v := c_0^- v^- + c_0^+ v^+$. Thus, the interface port variables are given by
		\begin{align*}
			f_{I, \mathcal{Q}_0 x} :=& v(0^+) = v(0^-), \\
			-e_{I, \mathcal{Q}_0 x} :=& p(0^+) - p(0^-), 
		\end{align*}
		and are related by the passivity relation
		\begin{equation*}
			f_I(t) = \frac{1}{R_I} e_I(t), \quad t \geq 0.
		\end{equation*}
		Furthermore, we have the boundary conditions
		\begin{equation*}
			p(a,t) = 0 \quad \text{and} \quad p(b,t) = R_b v(b,t), \quad t \geq 0.
		\end{equation*}
		Using the definition of the boundary port variables  \eqref{Boundary Flow and Effort} (and neglecting the time-dependence), we may write equivalently
		\begin{equation*}
			0 = \begin{bmatrix}
				0 & 0 & 1 & 0 \\
				-1 & R_b & 0 & 0 
			\end{bmatrix} \begin{bmatrix}
				p(b) \\
				v(b) \\
				p(a) \\
				v(a)
			\end{bmatrix} = \tilde{W}_B \begin{bmatrix}
				(\mathcal{Q}_0x)(b) \\
				(\mathcal{Q}_0x)(a)
			\end{bmatrix} = W_B \begin{bmatrix}
				f_{\partial, \mathcal{Q}_0 x} \\
				e_{\partial, \mathcal{Q}_0 x}
			\end{bmatrix},
		\end{equation*}
		with $W_B := \tilde{W}_B R_{\extern}^{-1} = \frac{1}{\sqrt{2}} \begin{bmatrix}
			0 & 1 & 1 & 0 \\
			-R_b & 1 & -1 & R_b
		\end{bmatrix} \in \mathbb{R}^{2 \times 4}$. Note that $\rank(W_B) = 2$. Furthermore, one computes that
		\begin{equation}
			W_B \Sigma_4 W_B^{\top} = \begin{bmatrix}
				0 & 0 \\
				0 & 2R_b
			\end{bmatrix} \geq 0.
            \label{eq:example boundary condition}
		\end{equation}
		Summing up, the port-Hamiltonian operator $A_{\mathcal{Q}_0}$ with domain specified by $W_B$ defined above for the coupled acoustic waves, satisfies the assumptions of Theorem \ref{Theorem A Generates a Contraction Semigroup}. Hence, $A_{\mathcal{Q}_0}$ generates a contraction semigroup on the energy space $(X, \|\cdot\|_{\mathcal{Q}_0})$. In particular, this system is well-posed.

\textbf{Exponential stability.} 
Theorem~\ref{Theorem Exponential Stability} may be used to infer exponential stability. 
By virtue of the power balance \eqref{Power Balance Equation wrt Boundary and Interface Port}, the associated port-Hamiltonian operator $A_{\mathcal{Q}_0}$ satisfies
\begin{align*}
    2 \langle A_{\mathcal{Q}_0} x, x \rangle_{\mathcal{Q}_0} = \langle e_{\partial, \mathcal{Q}_0 x}, f_{\partial, \mathcal{Q}_0 x} \rangle - e_{I, \mathcal{Q}_0 x} f_{I, \mathcal{Q}_0 x} \leq  \langle e_{\partial, \mathcal{Q}_0 x}, f_{\partial, \mathcal{Q}_0 x} \rangle
\end{align*}
for all $x \in D(A_{\mathcal{Q}_0})$. Using the boundary conditions, simple computations yield that
\begin{align*}
      \langle e_{\partial, \mathcal{Q}_0 x}, f_{\partial, \mathcal{Q}_0 x} \rangle = - R_b v(b)^2.
\end{align*}
Furthermore, 
\begin{align*}
    | (\mathcal{Q}^+ x)(b) |^2 = p(b)^2 + v(b)^2 = (1 + R_b^2) v(b)^2. 
\end{align*}
In summary, we have 
\begin{align*}
     \langle A_{\mathcal{Q}_0} x, x \rangle_{\mathcal{Q}_0} \leq \frac{1}{2} \langle e_{\partial, \mathcal{Q}_0 x}, f_{\partial, \mathcal{Q}_0 x} \rangle = \frac{-R_b}{2(1 + R_b^2)} | (\mathcal{Q}^+ x)(b) |^2
\end{align*}
for all $x \in D(A_{\mathcal{Q}_0})$. Hence, by Theorem \ref{Theorem Exponential Stability}, this system is exponentially stable. 

Note that exponential stability is lost when the acoustic impedance $R_{I}$ at the interface tends to infinity: In this case, we have $r = \frac{1}{R_I} \to 0$ as $R_{I} \to \infty$. In particular, the subsystems are perfectly isolated from each other and the boundary $z = b$ is never met. This reflects the Dirichlet condition at $z = l = 0$ of the subsystem defined on the interval $(a,0)$. The acoustic impedance $R_b$ is never active and exponential stability is lost. From a technical perspective, the inequality \eqref{eq:example boundary condition} is not strict, and so Theorem \ref{Theorem 2 Exponential Stability} is not applicable. Finally, as 
\begin{equation*}
    | (\mathcal{Q}^-x)(a)|^2 = p(a)^2 + v(a)^2 = v(a)^2,
\end{equation*}
the inequality \eqref{Theorem Exponential Stability - Inequality} does not hold for all $x_0 \in D(A_{\mathcal{Q}_0})$ either, and Theorem~\ref{Theorem Exponential Stability} is not applicable as well.

\section{Port-Hamiltonian Systems with Multiple Interfaces}
\label{Section Port-Hamiltonian systems with multiple interfaces}

It is straightforward to show that the model and the results presented in this paper for a single stationary interface can be generalized to the interface coupling of an arbitrary amount of interfaces. This situation often occurs in multiphase or heterogeneous systems, for instance, as an extension of the example, an acoustic
wave travelling in cement-based materials \cite{Aggelis_07}.
Let $ \mathcal{I} = \{ l_1, l_2, \ldots, l_n \} \subset (a,b)$ be the set of the fixed positions of the interfaces satisfying $ l_1 < l_2 < \ldots < l_n$, separating $n+1$ systems of conservation laws. The corresponding port-Hamiltonian system is given by
	\begin{equation}
		\label{Simplified PH-System - multiple interfaces}
		\frac{\partial}{\partial t} x(z,t) = \mathcal{J}_{\mathcal{I}} \big( \mathcal{Q}_{\mathcal{I}}(z) x(z,t) \big), \quad z \in [a,b] \setminus \mathcal{I}, \quad t >0.
	\end{equation}
Here, $\mathcal{Q}_{\mathcal{I}} = \sum_{k=1}^{n+1} c^k \mathcal{Q}^k \in \mathcal{L}(X)$ is a coercive matrix multiplication operator, and the functions $c^k$, $k \in \{1, \ldots, n+1\}$, are the color functions defined as follows:
	\begin{align*}
		c^1(z) := c_a^{l_1}(z) &= \begin{cases} 
			1, & z \in [a,l_1), \\
			0, & \mathrm{else},
		\end{cases} \quad c^2(z) := c_{l_1}^{l_2}(z) =  \begin{cases} 
			1, & z \in (l_1,l_2), \\
			0, & \mathrm{else},
		\end{cases} \\
  \vdots \\
  c^n (z) := c_{l_{n-1}}^{l_n}(z) &=  \begin{cases} 
			1, & z \in (l_{n-1},l_n), \\
			0, & \mathrm{else},
   \end{cases} \quad 
  c^{n+1}(z) := c_{l_n}^{b}(z) =  \begin{cases} 
			1, & z \in (l_n,b], \\
			0, & \mathrm{else}.
   \end{cases}
	\end{align*}
	The operator $\mathcal{J}_{\mathcal{I}} \colon D(\mathcal{J}_{\mathcal{I}}) \subset X \to X$ is given by
	\begin{align*}
			D(\mathcal{J}_{\mathcal{I}}) &= \left\{ x = (x_1,x_2) \in X \mid x_1 \in D(\mathbf{d}_{\mathcal{I}}^{\ast}), \, x_2 \in D(\mathbf{d}_{\mathcal{I}})\right\}, \\
			\mathcal{J}_{\mathcal{I}} x &= \begin{bmatrix}
				0 & \mathbf{d}_{\mathcal{I}}  \\
				- \mathbf{d}_{\mathcal{I}}^{\ast} & 0 
			\end{bmatrix} \begin{bmatrix}
				x_1 \\
				x_2
			\end{bmatrix}
			=
			 \begin{bmatrix}
				\mathbf{d}_{\mathcal{I}} x_2 \\
				- \mathbf{d}_{\mathcal{I}}^{\ast} x_1
			\end{bmatrix}, \quad x \in D(\mathcal{J}_{\mathcal{I}}).
	\end{align*}
	Here the operator $\mathbf{d}_{\mathcal{I}}$ is defined as follows:
 \begin{align*}
	 D(\mathbf{d}_{\mathcal{I}})
	 & = H^1([a,b], \mathbb{R}),  \\
	\mathbf{d}_{\mathcal{I}} x &= - \left[ \sum_{k=1}^{n+1} \frac{d}{dz} (c^k x) \right], \quad x \in D(\mathbf{d}_{\mathcal{I}}).
\end{align*}
Analogously to the single interface case, for a function $x = \sum_{k=1}^{n+1} c^k x^k \in D(\mathbf{d}_{\mathcal{I}})$ we have that
\begin{align*}
\mathbf{d}_{\mathcal{I}} x = - \sum_{k=1}^{n+1} c^k \frac{d}{dz} x^k.
\end{align*}
Its formal adjoint is given by
	\begin{align*}
		D(\mathbf{d}_{\mathcal{I}}^{\ast}) &= \left\{ y \in L^2([a,b], \mathbb{R}) \mid y_{|(a,l_1)} \in H^1((a,l_1), \mathbb{R}), \, y_{|(l_k,l_{k+1})} \in H^1((l_k,l_{k+1}), \mathbb{R}), \right. \\
  &\hspace{5 cm} \left. k \in \{1, \ldots, n-1\}, \,y_{|(l_n,b)} \in H^1((l_n,b), \mathbb{R}) \right\},  \\
		\mathbf{d}_{\mathcal{I}}^{\ast}y &= \left( - \mathbf{d}_{\mathcal{I}} - \left[ \sum_{k=1}^{n+1}\frac{d}{dz} c^k \right] \right) y, \quad y \in D(\mathbf{d}_{\mathcal{I}}^{\ast}).
	\end{align*}
Once again, for a function $y = \sum_{k=1}^{n+1} c^k y^k \in D(\mathbf{d}_{\mathcal{I}}^{\ast})$ it holds that
	\begin{equation*}
		\mathbf{d}_{\mathcal{I}}^{\ast} y = \sum_{k=1}^{n+1} c^k \frac{d}{dz} y^k.
	\end{equation*} 
	For all $x \in D(\mathbf{d}_{\mathcal{I}}), \,  y \in D(\mathbf{d}_{\mathcal{I}}^{\ast})$, we have the following relation between the operators $\mathbf{d}_{\mathcal{I}}$ and $\mathbf{d}_{\mathcal{I}}^{\ast}$ (cf. \eqref{Relation dl and dl_ast}):
	\begin{equation*}
		\langle \mathbf{d}_{\mathcal{I}} x , y \rangle_{L^2} =  - \big[ x(z) y(z) \big]_{a}^{b} + \sum_{k =1}^{n} x(l_k) \left[ y(l_k^+) - y(l_k^-) \right] + \langle x , \mathbf{d}_{\mathcal{I}}^{\ast} y \rangle_{L^2}.
	\end{equation*} 
 Furthermore, for all $k \in \{1,\ldots, n\}$ and some fixed $r_k \in \mathbb{R}$, the interface conditions are given as
	\begin{equation*}
		f_{I_k, \mathcal{Q}_{\mathcal{I}} x} = r_k e_{I_k, \mathcal{Q}_{\mathcal{I}} x},
	\end{equation*}
	where the interface port variables are defined by
	\begin{align*}
		f_{I_k, \mathcal{Q}_{\mathcal{I}} x} &= \left( \mathcal{Q}^{k+1}x\right)_2 (l_k^+) = \left( \mathcal{Q}^k x \right)_2 (l_k^-), \\
		-e_{I_k, \mathcal{Q}_{\mathcal{I}} x} &= \left( \mathcal{Q}^{k+1}x \right)_1 (l_k^+) - \left( \mathcal{Q}^k x \right)_1 (l_k^-).
    \end{align*}
	With the usual boundary conditions and these interface conditions, we may define the port-Hamiltonian operator $A_{\mathcal{Q}_{\mathcal{I}}} \colon D(A_{\mathcal{Q}_{\mathcal{I}}}) \subset X \to X$  associated with the system \eqref{Simplified PH-System - multiple interfaces} by
	\begin{align}
		\begin{split}
			D(A_{\mathcal{Q}_{\mathcal{I}}}) &= \left\{ x \in X \, \big| \, \mathcal{Q}_{\mathcal{I}} x \in D(\mathcal{J}_{\mathcal{I}}), \, f_{I_k, \mathcal{Q}_{\mathcal{I}}x} = r_k e_{I_k, \mathcal{Q}_{\mathcal{I}} x}, \, k \in \{1,\ldots, n\}, \right. \\
   &\hspace{7 cm} \left. W_B \begin{bmatrix}
				f_{\partial, \mathcal{Q}_{\mathcal{I}} x} \\
				e_{\partial, \mathcal{Q}_{\mathcal{I}} x}
			\end{bmatrix} = 0 \right\}, \\
			A_{\mathcal{Q}_{\mathcal{I}}}x &= \mathcal{J}_{\mathcal{I}} (\mathcal{Q}_{\mathcal{I}} x), \quad  x \in D(A_{\mathcal{Q}_{\mathcal{I}}}).
		\end{split}
		\label{Simplified PH System - Operator A - multiple interfaces}
	\end{align}
With some slight adjustments of the arguments presented in Sections \ref{Section Generation of a Contraction Semigroup} and \ref{Section Exponential Stability}, one can characterize the boundary and interface conditions under which the operator $A_{\mathcal{Q}_{\mathcal{I}}}$ generates a contraction semigroup (in accordance with Theorem \ref{Theorem A Generates a Contraction Semigroup}), and find sufficient conditions for the exponential stability of said semigroup (in accordance with the second assertion in Theorem \ref{Theorem Exponential Stability}).

\begin{corollary}
\label{Theorem A Generates a Contraction Semigroup - Multiple Interfaces}
    Consider the port-Hamiltonian operator $A_{\mathcal{Q}_{\mathcal{I}}} \colon D(A_{\mathcal
		Q_{\mathcal{I}}}) \subset X \to X$ defined in \eqref{Simplified PH System - Operator A - multiple interfaces} with $r_k \in \mathbb{R}$ for all $k \in \{1, \ldots, n\}$, and with a full rank matrix $W_B \in \mathbb{R}^{2 \times 4}$. Then the following assertions are equivalent:
		\begin{enumerate}
		    \item $A_{\mathcal{Q}_{\mathcal{I}}}$ is the infinitesimal generator of a contraction semigroup on $X$ endowed with the inner product 
		    $\langle \cdot, \cdot \rangle_{\mathcal{Q}_{\mathcal{I}}} = \frac{1}{2} \langle \cdot , \mathcal{Q}_{\mathcal{I}} \cdot \rangle_{L^2}$.
		    \item $A_{\mathcal{Q}_{\mathcal{I}}}$ is dissipative.
		    \item $r_k \geq 0$ for all $k \in \{1, \ldots, n\}$, and with $\Sigma_4$ defined in \eqref{Matrix Sigma}
		    we have
		    \begin{equation}
		        W_B \Sigma_4 W_B^{\top} \geq 0.
		        \label{Matrix Condition W_B_Multiple Interfaces}
		    \end{equation}
		\end{enumerate}  
        In addition, $A_{\mathcal{Q}_{\mathcal{I}}}$ is the infinitesimal generator of an isometric semigroup on the energy space $(X, \| \cdot \|_{\mathcal{Q}_{\mathcal{I}}})$ if and only if $r_k = 0$ for all $k \in \{1, \ldots, n\}$ and $W_B$ satisfies $W_B \Sigma_4 W_B^{\top} = 0$.
\end{corollary}

For exponential stability of the generated semigroup, we need to impose the following assumptions, which are slightly stricter than \ref{MatrixQ0Assumption1}-\ref{ContractionAssumption}:
	\begin{enumerate}[label = (B\arabic*)]
		\item \label{MatrixQ0Assumption1_Multiple Interfaces} The matrix operators $\mathcal{Q}^{k}$, $k \in \{1, \ldots, n+1\}$, defining the coercive operator $\mathcal{Q}_{\mathcal{I}} \in \mathcal{L}(X)$ satisfy $\mathcal{Q}^{k} \in \mathcal{C}^1([a,b], \mathbb{R}^{2 \times 2})$. 
		\item \label{ContractionAssumption_Multiple Interfaces} $r_k > 0$ for all $k \in \{1, \ldots, n\}$, $\rank(W_B) = 2$, and $W_B$ satisfies \eqref{Matrix Condition W_B_Multiple Interfaces}. 
	\end{enumerate}
    
In other words, Assumption \ref{ContractionAssumption_Multiple Interfaces} requires that energy is dissipated at each interface position, and that the port-Hamiltonian operator $A_{\mathcal{Q}_{\mathcal{I}}}$ generates a contraction semigroup (see Corollary~\ref{Theorem A Generates a Contraction Semigroup - Multiple Interfaces}).\begin{corollary}
    Let the assumptions \ref{MatrixQ0Assumption1_Multiple Interfaces}-\ref{ContractionAssumption_Multiple Interfaces} hold. Assume there exists some $k > 0$ such that either of the following estimates holds for all $x_0 \in D(A_{\mathcal{Q}_{\mathcal{I}}})$:
		 \begin{align*}
        \langle A_{\mathcal{Q}_{\mathcal{I}}} x_0, x_0 \rangle_{\mathcal{Q}_{\mathcal{I}}} &\leq - k | (\mathcal{Q}_{\mathcal{I}} x_0)(b) |^2, \\
        \langle A_{\mathcal{Q}_{\mathcal{I}}} x_0, x_0 \rangle_{\mathcal{Q}_{\mathcal{I}}} &\leq - k | (\mathcal{Q}_{\mathcal{I}} x_0)(a) |^2.
    \end{align*}
    Then $A_{\mathcal{Q}_{\mathcal{I}}}$ generates an exponentially stable $C_0$-semigroup on $(X, \| \cdot \|_{\mathcal{Q}_{\mathcal{I}}})$.
\end{corollary}

	\section{Conclusions and Extensions Towards Moving Interfaces}
	\label{sec:conclusions}

	Based on the port-Hamiltonian model of systems with moving interfaces suggested in \cite{Diagne2013}, we have given a mathematically rigorous definition of the linear port-Hamiltonian operator $A_{\mathcal{Q}_l}$, defined with respect to the constant color functions \eqref{Characteristic Functions c- and c+}, appearing in such models.  For this choice of color functions, we have derived the balance equation of the Hamiltonian functional in terms of boundary variables and interface variables at the boundary of the spatial domain and at the interface.

    For the case of a fixed interface position, 
    we have worked out necessary and sufficient conditions that guarantee that the associated port-Hamiltonian operator generates a contraction semigroup on the respective energy space. These passivity conditions have been defined with respect to the interface and boundary port variables and are easy to check in practice. Moreover, we have presented criteria for exponential stability of the $C_0$-semigroup generated by the associated port-Hamiltonian operator.

    Our study can be seen as a first step towards the development of the theory for the case of moving interfaces. At the time of writing this, there are still a number of notable obstacles towards this goal. It is tempting to let $l(\cdot)$ as a function of $t$ denote the moving interface and to consider the operators $A_{\mathcal{Q}_{l(t)}}$ as a family of operators that should generate an evolution family. Standard tools in this direction fail immediately, most notably because there is no subspace $Y$ dense in $L^2([a,b], \R^2)$ in the intersection of the domains $D(A_{\mathcal{Q}_{l(t)}})$, compare, e.g., \cite[Chapter 5]{Pazy1983}. 
    A route towards the generation of evolution families could be
provided, as it has been suggested in \cite{Diagne2013}, by considering
an additional pair of port variables associated with the displacement
of the interface. Thereby, a balance equation may be derived for the
Hamiltonian function and passivity properties may be established and
used. A very special case of this problem has been discussed in \cite{KMMW23}.
However, in this case, more general dissipative relations and possibly
dynamical systems should be considered at the interface, as they
appear, for instance, in the example of a piston moving in a chamber
and separating two gases \cite[Example 8]{Diagne2013}. It should also be noted that the same Hamiltonian operator (\ref{Operator Jl})
would appear in the case of diffusive systems, in the same way as
for systems without interface \cite[Chapter 6]{Villegas2007}, thereby
enabling us to consider the two--phase Stefan problem \cite[Chapter 4]{Visintin1996models},
used to model melting/solidification or vaporization/condensation
processes.

	
	
\section*{Acknowledgments}
The work of A. Kilian was supported
 by BMBF under Grant 16ME0619.
 A. Mironchenko was supported by DFG (German Research Foundation) grant MI 1886/2-2.



 \bibliographystyle{elsarticle-num} 
\bibliography{references}





\end{document}